


 \documentclass[final,1p,times]{elsarticle}


\usepackage{amssymb}
\usepackage{amsthm}


\usepackage{graphicx}
\usepackage{subcaption}
\usepackage{caption}
\usepackage{float}
\usepackage{mathtools}
\usepackage{amssymb}
\usepackage{todonotes}
\usepackage{verbatim}
\usepackage{bbm}
\usepackage{enumitem}
\usepackage{multirow}

\usepackage{tikz}
\usetikzlibrary{patterns}
\usepackage{pgfplots}
\usepgfplotslibrary{colorbrewer}
\pgfplotsset{compat=1.8}
\definecolor{clr1}{RGB}{27,158,119}
\definecolor{clr2}{RGB}{217,95,2}
\definecolor{clr3}{RGB}{117,112,179}
\definecolor{clr4}{RGB}{231,41,138}
\definecolor{clr5}{RGB}{102,166,30}
\definecolor{clr6}{RGB}{230,171,2}
\definecolor{clr7}{RGB}{166,118,29}
\pgfplotsset{
    cycle list={clr1,clr2,clr3,clr4,clr5,clr6,clr7},
}

\newtheorem{property}{Property}
\newtheorem{remark}{Remark}
\newtheorem{theorem}{Theorem}
\newtheorem{conjecture}{Conjecture}
\newtheorem{lemma}{Lemma}
\newtheorem{example}{Example}
\newtheorem{definition}{Definition}

\journal{ValueTools 2020}

\begin{document}

\begin{frontmatter}



\title{Stability of Redundancy Systems with Processor Sharing}


\author[label1]{Youri Raaijmakers\corref{cor1}}
\ead{y.raaijmakers@tue.nl}
\author[label1]{Sem Borst}
\author[label1]{Onno Boxma}

\address[label1]{Department of Mathematics and Computer Science, Eindhoven University of Technology, 5600 MB Eindhoven, The Netherlands}
\cortext[cor1]{Corresponding author}

\begin{abstract}
We investigate the stability condition for redundancy-$d$ systems where each of the servers follows a processor-sharing (PS) discipline. We allow for generally distributed job sizes, with possible dependence among the $d$ replica sizes being governed by an arbitrary joint distribution. We establish that the stability condition for the associated fluid-limit model is characterized by the expectation of the minimum of $d$ replica sizes being less than the mean interarrival time per server. In the special case of identical replicas, the stability condition is insensitive to the job size distribution given its mean, and the stability condition is inversely proportional to the number of replicas. In the special case of i.i.d.\ replicas, the stability threshold decreases (increases) in the number of replicas for job size distributions that are NBU (NWU). We also discuss extensions to scenarios with heterogeneous servers.
\end{abstract}

\begin{keyword}
Parallel-server system \sep redundancy \sep stability \sep processor-sharing


\end{keyword}

\end{frontmatter}


\section{Introduction}
\label{sec: introduction}
The interest in redundancy systems has strongly grown in recent years, fueled by empirical evidence that redundancy improves performance in applications with many servers such as web page downloads and Google search queries \cite{AGSS-ESM,VGMSRS-LLR}. The most important feature of redundancy is the replication of each incoming job. The replicas are instantaneously allocated to, say, $d$ different servers, chosen uniformly at random (without replacement), and abandoned as soon as either the first of these $d$ replicas starts service (`cancel-on-start' c.o.s.) or the first of these $d$ replicas finishes service (`cancel-on-completion' c.o.c.). 

As alluded to above, allocating replicas of the same job to multiple servers has the potential to improve delay performance. Indeed, adding replicas increases the chance for one of the replicas to find a short queue, and the c.o.s.\ version is in fact equivalent to a Join-the-Smallest-Workload (JSW) like policy. On the other hand, it may also result in potential vulnerabilities and could cause instability in the c.o.c.\ version since the same job may be in service at multiple servers, potentially wasting service capacity. 
The above trade-off already suggests that establishing the stability condition is not straightforward. 
Indeed, despite the numerous studies on redundancy models \cite{AAJV-OSR,GHBSW-DSSJS,GHBSWVZ-PDR,GZDHBHSW-RLR,HB-AIR,RBB-DPP,RBB-RSSB,JSW-ERTLR}, the stability condition for c.o.c.\ redundancy is only known in a few (special) cases.
 
\textit{Gardner et al.} \cite{GZDHBHSW-RLR} considered a scenario with i.i.d.\ replicas, exponential job sizes and the FCFS scheduling discipline. They proved that the stability condition is given by $\rho < 1$, where $\rho:=\frac{\lambda \mathbb{E}[X]}{N}$ denotes the load, $\lambda$ the arrival rate of the jobs, $N$ the number of servers and $\mathbb{E}[X]$ the expected job size. Note that the stability condition is independent of the number of replicas. In addition, they obtained an analytical expression for the expected latency.

\textit{Anton et al.} \cite{AAJV-OSR} examined the stability conditions for various scheduling disciplines, both for i.i.d.\ and identical replicas, and exponential job sizes. They showed that for i.i.d.\ replicas the stability conditions for FCFS, PS and Random Order of Service (ROS) are the same. For identical replicas the stability condition for PS is given by $\rho<1/d$, and the maximum sustainable load is thus inversely proportional to the number of replicas. The proofs rely on scaling limits of appropriate lower- and upperbound systems.

For general job size distributions, the stability condition remains an open problem. For i.i.d.\ replicas, the FCFS scheduling discipline and a specific non-exponential distribution, referred to as the scaled Bernoulli distribution, the stability condition is asymptotically given by $\rho N = \lambda \mathbb{E}[X] < K^{d-1}$, where $K$ is the scale of the job size \cite{RBB-RSSB}. Observe that the stability condition is independent of the number of servers, but depends on the scale parameter $K$. For identical replicas there are, to the best of our knowledge, no explicit expressions for the stability condition for any scheduling discipline. A summary of the stability condition results, except the trivial cases $d=1$ and $d=N$, is provided in Table~\ref{tab: stability condition results}.

The present paper focuses on the stability condition for the PS discipline and extends the results from \cite{AAJV-OSR} to general job size distributions with possible dependence among the replicas. This covers the extreme scenarios of perfect dependence (identical replicas) and no dependence at all (i.i.d.\ replicas), as previously considered in the literature, as special cases. The PS discipline is highly relevant as an idealization of Round-Robin scheduling policies that are widely implemented in time-shared computer systems for fairness reasons. To prove the stability condition we consider carefully chosen lower- and upperbound systems. The lowerbound system is similar to the lowerbound system proposed in \cite{AAJV-OSR}. An alternative novel representation of this lowerbound system allows us to prove a necessary stability condition for general job size distributions, via the use of fluid limits. The upperbound system gives an indication for the sufficient stability condition, again via the use of fluid limits. 

The key property of these fluid limits is that all the components of the fluid limit in the lower- and upperbound systems remain equal at all times when starting with the same initial conditions. More importantly, even the fluid limits of the lower- and upperbound systems coincide when starting with the same initial conditions. Hence the stability conditions for the fluid limits of the lower- and upperbound systems coincide, and thus immediately yields the stability condition for the fluid limit of the original system.

The above two properties of the fluid limit reflect that the queue lengths in the original system have the tendency to remain equal when starting from large initial values. This implies that each of the $d$ replicas of a given job will be served at \textit{approximately} the same rate, and hence the total amount of service capacity consumed by an arbitrary job is $d \mathbb{E}[\min\{X_{1},\dots,X_{d}\}]$, yielding the stability condition $\tilde{\rho} := \frac{d \lambda \mathbb{E}[\min\{X_{1},\dots,X_{d}\}]}{N}<1$. Note that for identical replicas the stability condition is insensitive to the job size distribution given its mean, and the maximum sustainable load is inversely proportional to the number of replicas. For i.i.d.\ replicas, $\mathbb{E}[\min\{X_{1},\dots,X_{d}\}]$ decreases and increases in the number of replicas for job size distributions that are New-Better-than-Used (NBU) and New-Worse-than-Used (NWU), respectively, and thus the stability threshold behaves accordingly. 

A key question that remains is the broader issue under which assumptions the (in)stability of the original stochastic process can be deduced from the (in)stability of the fluid limit. While there are strong results in this context \cite{D-FLI,D-PHRFL,DH-PNFMS}, they do not directly cover the combination of general job size distributions and non-head-of-the-line service disciplines like PS. We will further discuss this matter in Section~\ref{sec: stability conditions stochastic system}.

The remainder of the paper is organized as follows. In Section~\ref{sec: model description} we present a detailed description of the redundancy-$d$ model with the PS discipline. The corresponding stability conditions for the fluid limits of both the lower- and upperbound systems, and therefore also of the original, are proved in Section~\ref{sec: stability analysis}. Section~\ref{sec: numerical results} contains numerical experiments that show the accuracy of the lower- and upperbound systems used to prove the stability condition. Section~\ref{sec: conclusion} contains conclusions and some suggestions for further research.

\begin{table}[]
\centering
\tabcolsep=0.11cm
\caption{Summary of the stability condition results. The $X$ indicates a scenario for which we obtain results.}
\label{tab: stability condition results}
\begin{tabular}{|l|c|c|c|c|c|}
\hline
& \multicolumn{2}{c|}{Exponential} & \multicolumn{1}{c|}{Bernoulli} & \multicolumn{2}{c|}{General} \\ \hline
& i.i.d.\ & identical & i.i.d.\ & i.i.d.\ & identical \\ \hline
FCFS & $\rho < 1$ \cite{GZDHBHSW-RLR} &  & $\rho N < K^{d-1}$ \cite{RBB-RSSB} & & \\ \hline
PS & $\rho < 1$ \cite{AAJV-OSR} & $\rho < \frac{1}{d}$ \cite{AAJV-OSR} &  & $X$ & $X$  \\ \hline
ROS & $\rho < 1$ \cite{AAJV-OSR} & $\rho < 1$ \cite{AAJV-OSR} &&& \\ \hline
\end{tabular}
\end{table}

\section{Model description}
\label{sec: model description}
Consider a system with $N$ parallel servers where jobs arrive as a Poisson process of rate $\lambda$. Each of the $N$ servers follows a processor-sharing (PS) discipline. In this paper we focus on the case of homogeneous server speeds, but some of the statements can be extended to heterogeneous server speeds as further discussed in Section~\ref{sec: conclusion}. When a job arrives, the dispatcher immediately assigns replicas to $d \leq N$ servers selected uniformly at random (without replacement). As soon as the first of these $d$ replicas finishes service, the remaining ones are abandoned. 
We allow the replica sizes $X_{1},\dots,X_{d}$ of a job to be governed by some joint distribution $F(x_{1},\dots,x_{d})$, where $X_{i}$, $i=1,\dots,d$, are each distributed as a generic random variable $X$, but not necessarily independent. By Sklar's Theorem, see e.g., \cite{N-IC}, we know that any joint distribution can be written in terms of marginal distribution functions and a copula $C: [0,1]^{d} \rightarrow [0,1]$ which describes the dependency structure between the variables. Note that we do not allow the dependency structure to depend on the state of the system.
Special cases of the dependency structure are i) perfect dependency, so-called identical replicas, where the job size is preserved for all replicas, i.e., $X_{i}=X$, $i=1,\dots,d$, ii) no dependency at all, so-called i.i.d.\ replicas. 

Observe that in this paper the load of the system is defined by $\tilde{\rho} = \frac{d \lambda \mathbb{E}[\min\{X_{1},\dots,X_{d}\}]}{N}$, as opposed to the load $\rho=\frac{\lambda \mathbb{E}[X]}{N}$ defined in \cite{AAJV-OSR,GZDHBHSW-RLR} and used in Table~\ref{tab: stability condition results}.

\section{Stability analysis}
\label{sec: stability analysis}
In this section we analyze the stability condition for redundancy-$d$ systems with PS. In Sections~\ref{sec: lowerbound} and~\ref{sec: upperbound} carefully chosen lower- and upperbound systems are introduced. In Section~\ref{sec: fluid limit} we consider the associated fluid-limit models of these lower- and upperbound systems to prove the corresponding stability conditions. Figure~\ref{fig: overview stability} depicts an overview of this section.

\begin{figure}[H]
  \centering    
    \resizebox{0.75\textwidth}{!}{\begin{tikzpicture}
\draw[<-] node[align=center,left]{Lowerbound\\ system} (0,0) -- node[red,above]{\footnotesize Sec.~$3.1$} (1,0) node[align=center,right]{Stochastic model\\ Redundancy PS};
\draw[->]  (3.9,0) -- node[red,above]{\footnotesize Sec.~$3.2$} (4.9,0) node[align=center,right]{Upperbound\\ system};
\draw[->] (-1,-0.5) -- node[red,right]{\footnotesize Sec.~$3.3$} (-1,-1) node[below]{Fluid model};
\draw[->] (5.9,-0.5) -- node[red,left]{\footnotesize Sec.~$3.3$} (5.9,-1) node[below]{Fluid model};
\draw[->] (-1,-1.5) -- node[red,right]{\footnotesize Lemmas~$2$+$3$} (-1,-2) node[below]{Stability};
\draw[->] (5.9,-1.5) -- node[red,left]{\footnotesize Lemmas~$2$+$3$} (5.9,-2) node[below]{Stability};
\draw (0,-2.25) -- (1,-2.25);
\draw (3.9,-2.25) -- (4.9,-2.25);
\node at (2.55,-2.25) {both coincide};
\end{tikzpicture}}
    \caption{Overview of the various systems.}
    \label{fig: overview stability}
\end{figure}
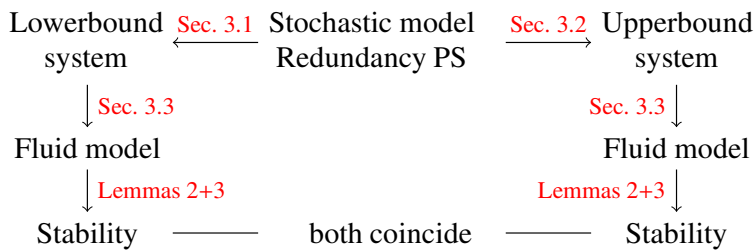

\subsection{Lowerbound system}
\label{sec: lowerbound}

In this section we introduce a lowerbound system which is the same as our original system except for two differences. 
In the lowerbound system replicas always receive a service rate equal to the reciprocal of the minimum queue length, at that time, of all the sampled servers, instead of a service rate equal to the reciprocal of the queue length at that specific server.
The second difference is that in the lowerbound system the sizes of all the replicas are equal to $X_{\text{min}} := \min\{X_{1},\dots,X_{d}\}$, the minimum size of the $d$ replicas in the original system. However, this latter difference actually just follows from the first difference. Namely, if all replicas receive service at equal rate then it immediately follows that the smallest replica will always complete service first, after which the other replicas are abandoned. 

The above-described system provides a lowerbound in the sense that the residual sizes of the replicas are always smaller than or equal to the residual sizes of the replicas in the original system. In particular, the sojourn time of a job in the lowerbound system is always smaller than or equal to that in the original system. The ordering of the residual sizes of the replicas can be established with sample-path arguments for so-called `monotonic' processor-sharing networks, see Lemma~$1$ in \cite{BP-SBPS}.

Next, we give an example to further clarify the operation of the lowerbound system.

\begin{example}
\label{exam: service rate lowerbound}
Assume that the queue lengths at the $d=2$ sampled servers are $(4,5)$. Then, in the original system the replicas receive service at rates $1/4$ and $1/5$, respectively. In the lowerbound system both replicas receive service at rate $1/4= 1/\min\{4,5\} = \max\{1/4,1/5\}$. 
\end{example}

\begin{remark}
For $d=1$ and $d=N$ this lowerbound system is exactly the same as the original system and in the special case of identical replicas this lowerbound system is exactly the same as the lowerbound system introduced in \cite{AAJV-OSR}.
\end{remark}

We now provide an alternative way of viewing the lowerbound system, without any redundant replicas. 

Consider $M={N \choose d}$ (fictitious) job classes, where each job class corresponds to one of the $N \choose d$ possible combinations of $d$ servers. Let $\boldsymbol{s}^{i} \subseteq \{1,\dots,N\}$ denote the set of servers corresponding to the $i$-th job class. Without loss of generality, we suppose that job class $1$ corresponds to the set of servers $\boldsymbol{s}^{1}=\{1,\dots,d\}$, job class $2$ to the set $\boldsymbol{s}^{2}=\{1,\dots,d-1,d+1\}$ and finally job class $M$ to the set $\boldsymbol{s}^{M}=\{N-d+1,\dots,N\}$. All the jobs of a certain class receive service at the same rate, which depends on the number of jobs present of other classes, namely those classes that correspond to sets of servers that have a server in common. Thus, we can also see this as $M$ individual (virtual) queues, one for each job class, that follow a PS discipline with a rate that depends on the number of jobs present at the other virtual queues. Note that only jobs from class $i$ arrive at virtual queue $i$ and these jobs arrive according to a Poisson process with rate $\lambda_{i} = \lambda / M$, see also Figure~\ref{fig: equivalence systems}. 

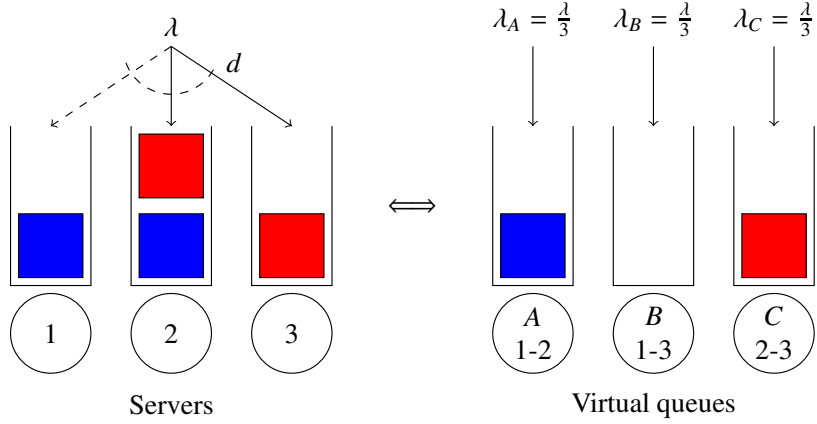
\begin{figure}[htbp]
  \centering
    \resizebox{0.8\textwidth}{!}{

\begin{tikzpicture}[scale=1]
\draw (0,2) -- (0,0) -- (1,0) -- (1,2);
\draw (1.5,2) -- (1.5,0) -- (2.5,0) -- (2.5,2);
\draw (3,2) -- (3,0) -- (4,0) -- (4,2);
\draw (0.5,-0.6) circle [radius=0.5cm] node{$1$};
\draw (2,-0.6) circle [radius=0.5cm] node{$2$};
\draw (3.5,-0.6) circle [radius=0.5cm] node{$3$};
\node at (2,-1.5) {Servers};

\filldraw[fill=blue] (0.1,0.1) rectangle (0.9,0.9);
\filldraw[fill=blue] (1.6,0.1) rectangle (2.4,0.9);
\filldraw[fill=red] (1.6,1.1) rectangle (2.4,1.9);
\filldraw[fill=red] (3.1,0.1) rectangle (3.9,0.9);

\draw[dashed,->] (2,3)--(0.5,2);
\draw[->] (2,3)node[above]{$\lambda$}--(2,2);
\draw[->] (2,3)--(3.5,2);
\draw[dashed] ([shift=(200:0.6cm)]2,3) arc (200:340:0.6cm) node[right]{$d$};

\draw (6,2) -- (6,0) -- (7,0) -- (7,2);
\draw (7.5,2) -- (7.5,0) -- (8.5,0) -- (8.5,2);
\draw (9,2) -- (9,0) -- (10,0) -- (10,2);
\draw (6.5,-0.6) circle [radius=0.5cm] node[align=center]{$A$\\$1$-$2$};
\draw (8,-0.6) circle [radius=0.5cm] node[align=center]{$B$\\$1$-$3$};
\draw (9.5,-0.6) circle [radius=0.5cm] node[align=center]{$C$\\$2$-$3$};
\node at (8,-1.5) {Virtual queues};

\filldraw[fill=blue] (6.1,0.1) rectangle (6.9,0.9);
\filldraw[fill=red] (9.1,0.1) rectangle (9.9,0.9);

\draw[->] (6.5,3)node[above]{$\lambda_{A}=\frac{\lambda}{3}$}--(6.5,2);
\draw[->] (8,3)node[above]{$\lambda_{B}=\frac{\lambda}{3}$}--(8,2);
\draw[->] (9.5,3)node[above]{$\lambda_{C}=\frac{\lambda}{3}$}--(9.5,2);

\node at (5,1) {$\Longleftrightarrow$};
\end{tikzpicture}

    \caption{Visualization of the lowerbound systems.}
    \label{fig: equivalence systems}
\end{figure}

\begin{example}
\label{exam: example lowerbound}
Consider the example visualized in Figure~\ref{fig: equivalence systems} with $N=3$ servers and $d=2$ replicas. In the lowerbound system we have $M = 3$ separate $M/X_{\text{min}}/1/PS$ virtual queues, for clarity denoted by $A$, $B$ and $C$. Moreover $\boldsymbol{s}^{A}=\{1,2\}$, $\boldsymbol{s}^{B}=\{1,3\}$ and $\boldsymbol{s}^{C}=\{2,3\}$. Note that in general the number of servers in the original system and the number of virtual queues may differ. Let $Q^{*}_{j}$ denote the number of jobs at server $j$, $j \in \{1,2,3\}$, and $Q_{i}$ the number of jobs at virtual queue $i$, $i \in \{A,B,C\}$. Observe that at server $2$ on the left hand side two types of jobs can arrive, i.e., jobs which have replicas at servers $1$ and $2$ (this corresponds to job class $A$ on the right hand side) and jobs which have replicas at servers $2$ and $3$ (this corresponds to job class $C$ on the right hand side). Thus, the queue length at server $2$ is equal to the sum of the queue lengths at virtual queues $A$ and $C$, i.e., $Q^{*}_{2}=Q_{A}+Q_{C}$. Therefore, the blue job receives service at rate $1=1/\min\{Q^{*}_{1},Q^{*}_{2}\} = 1/\min\{Q_{A}+Q_{B},Q_{A}+Q_{C}\}$. 

In this example virtual queue $A$ is working at rate $\frac{Q_{A}}{\min\{Q_{A}+Q_{B},Q_{A}+Q_{C}\}}$. Equivalently, virtual queues $B$ and $C$ are working at rates\\ $\frac{Q_{B}}{\min\{Q_{A}+Q_{B},Q_{B}+Q_{C}\}}$ and $\frac{Q_{C}}{\min\{Q_{A}+Q_{C},Q_{B}+Q_{C}\}}$, respectively. 
\end{example}

In general, the service rate, in the lowerbound system, at virtual queue $i$ is 
\begin{align*}
\frac{Q_{i}}{\min\left\{\sum_{i_{1} \in \mathcal{S}_{1}^{i}} Q_{i_{1}},\dots,\sum_{i_{d} \in \mathcal{S}_{d}^{i}}Q_{i_{d}}\right\}},
\end{align*}
where $\mathcal{S}_{j}^{i} = \{k : s^{i}_{j} \in \boldsymbol{s}^{k}\}$ denotes the set of job classes which share server $s^{i}_{j}$ in their corresponding server set and $Q_{i}$ the number of jobs at virtual queue $i$. Thus $\sum_{i_{1} \in \mathcal{S}_{1}^{i}} Q_{i_{1}}$ is the number of jobs at server $s_{1}^{i}$. In Example~\ref{exam: example lowerbound} we have $\mathcal{S}_{1}^{A}=\mathcal{S}_{1}^{B}=\{A,B\}$, $\mathcal{S}_{2}^{A}=\mathcal{S}_{1}^{C}=\{A,C\}$ and $\mathcal{S}_{2}^{B}=\mathcal{S}_{2}^{C}=\{B,C\}$. 

Again, note that this alternative lowerbound system, explained in Example~\ref{exam: example lowerbound}, behaves exactly the same as the lowerbound system described at the beginning of this subsection, in the sense that a job at virtual queue $i$ receives exactly the same service rate as the $d$ replicas at the servers in the original system that belong to $\boldsymbol{s}^{i}$. However, in the alternative lowerbound system there is no replication of jobs, i.e., the dependency is captured in the service rate, which makes the alternative lowerbound system easier to analyze. 

\subsection{Upperbound system}
\label{sec: upperbound}
In this section we introduce an upperbound system which is the same as our original system except for two differences. In the upperbound system replicas always receive a service rate equal to the reciprocal of the maximum queue length, at that time, of all the sampled servers, instead of a service rate equal to the reciprocal of the queue length at that specific server. The second difference is that in the upperbound system the sizes of all the $d$ replicas are equal to $X_{\text{min}}=\min\{X_{1},\dots,X_{d}\}$, the minimum size of the $d$ replicas in the original system. Again, similar to the lowerbound system, note that this latter difference is a direct implication of the first difference since all the $d$ replicas receive service at equal rate. Therefore the smallest replica will always complete service first. 

The above-described system provides an upperbound in the sense that the residual sizes of the replicas are always larger than or equal to the residual sizes of the replicas in the original system. Once again, this ordering can be proved with sample-path arguments for so called monotonic processor-sharing networks. 

For the upperbound system we give an example similar to Example~\ref{exam: service rate lowerbound}.
\begin{example}
Assume that the queue lengths at the $d=2$ sampled servers are $(4,5)$. Then, in the original system the replicas receive service at rates $1/4$ and $1/5$, respectively. In the upperbound system both replicas receive service at a rate $1/5= 1/\max\{4,5\} = \min\{1/4,1/5\}$. 
\end{example}

\begin{remark}
For $d=1$ and $d=N$ this upperbound system is exactly the same as the original system.
\end{remark}

Just like for the lowerbound system, we now provide an alternative way of viewing the upperbound system. This is accomplished by taking the same set of $M={N \choose d}$ virtual queues. The only difference in Example~\ref{exam: example lowerbound} is that now the blue job receives service at rate $1=1/\max\{Q^{*}_{1},Q^{*}_{2}\} = 1/\max\{Q_{A}+Q_{B},Q_{A}+Q_{C}\}$.

In general, the service rate, in the upperbound system, at virtual queue $i$ is 
\begin{align*}
\frac{Q_{i}}{\max\left\{\sum_{i_{1} \in \mathcal{S}_{1}^{i}} Q_{i_{1}},\dots,\sum_{i_{d} \in \mathcal{S}_{d}^{i}}Q_{i_{d}}\right\}}.
\end{align*}

Because of the similarities of the lower- and upperbound systems, the fluid limits are very similar as well. Therefore we discuss both fluid limits simultaneously in the next section. 

\subsection{Fluid-limit models}
\label{sec: fluid limit}
Let $Q_{i}(t)$ denote the number of jobs at virtual queue $i$ at time $t$ with $\boldsymbol{Q}(t)=(Q_{1}(t),\dots,Q_{M}(t))$. 
The time until virtual queue $i$ first becomes empty is denoted by $\tau_{i} = \inf \{t | Q_{i}(t) = 0\}$. 
The service rate per job at virtual queue $i$ at time $s$ is given by 
\begin{align}
\label{eq: service rate lowerbound}
\varphi_{i}\left(\boldsymbol{Q}(s)\right) = \frac{1}{\min\left\{\sum_{i_{1} \in \mathcal{S}_{1}^{i}} Q_{i_{1}}(s),\dots,\sum_{i_{d} \in \mathcal{S}_{d}^{i}}Q_{i_{d}}(s)\right\}},
\end{align}
for the lowerbound system and 
\begin{align}
\label{eq: service rate upperbound}
\varphi_{i}\left(\boldsymbol{Q}(s)\right) = \frac{1}{\max\left\{\sum_{i_{1} \in \mathcal{S}_{1}^{i}} Q_{i_{1}}(s),\dots,\sum_{i_{d} \in \mathcal{S}_{d}^{i}} Q_{i_{d}}(s)\right\}},
\end{align}
for the upperbound system whenever $Q_{i}(s)>0$ and $\varphi_{i}(\boldsymbol{Q}(s))=0$ for $Q_{i}(s)=0$. 
The attained service process, i.e., the cumulative amount of processing time per job allocated to virtual queue $i$ during the interval $[s,t]$, is
\begin{align*}
\eta_{i}(s,t) = \int_{u=s}^{t} \varphi_{i}(\boldsymbol{Q}(u)) \text{d}u.
\end{align*}
For $0 \leq t \leq \tau_{i}$, $\eta_{i}(0,t)$ is continuous and strictly increasing from an initial value of 0, and for $t > \tau_{i}$ it is (Lipschitz) continuous nondecreasing in the second argument. 

Let $U_{ik}$ be the arrival time of the $k$th job at virtual queue $i$. Define $A_{i}(t) = \max\{k: U_{ik} < t\}$ as the number of jobs that arrive at virtual queue $i$ during the interval $(0,t]$. The virtual queue length process satisfies
\begin{align}
\label{eq: virtual queue length process}
Q_{i}(t) = \sum_{l=1}^{Q_{i}(0)} \mathbbm{1}_{\{v'_{il} > \eta_{i}(t)\}} + \sum_{k=1}^{A_{i}(t)} \mathbbm{1}_{\{v_{ik} > \eta_{i}(U_{ik},t)\}},
\end{align}
where $v'_{il}$ denotes the residual job size of the $l$th initial job at virtual queue $i$ and $v_{ik}$ the job size of the $k$th arriving job at virtual queue $i$.

We consider the behavior of the system on a fluid scale and define the scaled processes
\begin{align*}
&\overline{Q}_{i}^{n}(t) = Q_{i}(nt)/n, \\
&\overline{\eta}_{i}^{n}(s,t) = \int_{u=s}^{t} \varphi_{i}(\boldsymbol{\overline{Q}}^{n}(u))\text{d}u = \int_{u=s}^{t} n \varphi_{i}(\boldsymbol{Q}(n u))\text{d}u = \eta_{i}(ns,nt), \\
& \overline{\tau}_{i}^{n} = \inf \{t | \overline{Q}_{i}^{n}(t)=0 \} = \tau_{i}/n.
\end{align*}
The scaled attained service time is continuous and strictly increasing for $0 \leq t < \overline{\tau}_{i}^{n}$, and continuous non-decreasing for $t \geq \overline{\tau}_{i}^{n}$ in the second argument.

\begin{definition}
A non-negative continuous function $\overline{Q}_{i}(\cdot)$ is a fluid-model solution if it satisfies the functional equation
\begin{align}
\label{eq: fluid limit}
\overline{Q}_{i}(t) = \overline{Q}_{i}(0) (1-G(\overline{\eta}_{i}(0,t))) + \frac{\lambda}{M} \int_{s=0}^{t} (1-F_{X_{\text{min}}}(\overline{\eta}_{i}(s,t))) \text{d}s,
\end{align}
where $G(\cdot)$ is the service time distribution of initial jobs, $F_{X_{\text{min}}}(\cdot)$ the service time distribution of arriving jobs and
\begin{align*}
\overline{\eta}_{i}(s,t) = \int_{u=s}^{t} \varphi_{i}(\overline{\boldsymbol{Q}}(u)) \text{d}u.
\end{align*}
\end{definition}

\begin{theorem}
\label{thm: fluid limit}
The limit point of any convergent subsequence of $(\overline{Q}_{i}^{n}(t); t \geq 0)$ is almost surely a solution of the fluid-model Equation~\eqref{eq: fluid limit}.
\end{theorem}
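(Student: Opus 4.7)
The plan is to start from the sample-path identity \eqref{eq: virtual queue length process} for $Q_i(nt)$, divide by $n$, and show that each of its two terms converges, along any convergent subsequence of $\overline{\boldsymbol{Q}}^n$, to the corresponding term on the right-hand side of \eqref{eq: fluid limit}. The work divides into three tasks: (i) establish relative compactness of $\{\overline{Q}_i^n\}_{n\ge 1}$ so that convergent subsequences exist and have continuous limits, (ii) show that the rescaled attained-service process $\overline{\eta}_i^n$ converges to $\overline{\eta}_i$ along such a subsequence, and (iii) pass to the limit inside the two indicator sums using functional strong laws of large numbers.

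For (i), the Poisson arrivals and the bound $Q_i(nt)\le Q_i(0)+A_i(nt)$ give an almost sure uniform bound on $\overline{Q}_i^n$ on compact sets, since the FSLLN for Poisson processes yields $A_i(nt)/n\to (\lambda/M)t$ uniformly on compacts. Upward jumps of $\overline{Q}_i^n$ are of size $1/n$ and, in both the lower- and upperbound systems, the per-virtual-queue total departure rate $Q_i\varphi_i$ is bounded by $1$, so the downward drift of $\overline{Q}_i^n$ is uniformly controlled; this yields tightness in the Skorokhod space with almost surely continuous subsequential limits $\overline{Q}_i$. For (ii), along such a subsequence the integrand $\varphi_i(\overline{\boldsymbol{Q}}^n(u))$ converges to $\varphi_i(\overline{\boldsymbol{Q}}(u))$ wherever $\overline{Q}_i(u)>0$ by continuity of $\min$ and $\max$; dominated convergence (using $Q_i\varphi_i\le 1$) then gives $\overline{\eta}_i^n(s,t)\to\overline{\eta}_i(s,t)$ uniformly on compact sets of $(s,t)$ on which $\overline{Q}_i$ stays positive, with the convention $\varphi_i=0$ when $Q_i=0$ matching the flat segment of the fluid solution past any depletion time.

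For (iii), the first term is handled by the classical Glivenko-Cantelli theorem: the residual initial sizes $v'_{il}$ are i.i.d. with c.d.f. $G$, and at continuity points of $G$ one obtains $\frac{1}{n}\sum_{l=1}^{Q_i(0)}\mathbbm{1}_{\{v'_{il}>\eta_i(nt)\}}\to\overline{Q}_i(0)\bigl(1-G(\overline{\eta}_i(0,t))\bigr)$. The second term requires a functional SLLN for marked Poisson processes: the empirical measure of the pairs $(U_{ik}/n,v_{ik})$ converges almost surely on $[0,t]\times\mathbb{R}_+$ to $(\lambda/M)\,\mathrm{Leb}\otimes F_{X_{\min}}$; coupling this with uniform convergence of $\overline{\eta}_i^n(\cdot,t)$ to $\overline{\eta}_i(\cdot,t)$ and continuity of $F_{X_{\min}}$ yields $\frac{1}{n}\sum_{k=1}^{A_i(nt)}\mathbbm{1}_{\{v_{ik}>\eta_i(U_{ik},nt)\}}\to\frac{\lambda}{M}\int_0^t\bigl(1-F_{X_{\min}}(\overline{\eta}_i(s,t))\bigr)\mathrm{d}s$, completing \eqref{eq: fluid limit}.

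The hardest step is (ii), because $\varphi_i$ is singular at the boundary $\{Q_i=0\}$: individual per-job rates can blow up pre-limit even though the total completion rate stays bounded by $1$. The remedy is to work with the product $Q_i\varphi_i$ rather than $\varphi_i$ itself, to invoke monotonicity properties of processor-sharing networks as in \cite{BP-SBPS}, and to argue that contributions from short neighbourhoods of depletion times vanish in the limit; this is the standard technical hurdle in fluid-limit theorems for PS networks with general service distributions.
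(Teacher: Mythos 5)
Your overall architecture (scale the identity \eqref{eq: virtual queue length process}, treat the initial-jobs term and the arrivals term separately, use convergence of the scaled attained service together with a functional law of large numbers for the marked arrival process) is the same as the paper's, which adapts the $M/G/1$-PS argument of Egorova and Lemma~5.1 of Gromoll--Robert--Zwart. However, there is a genuine gap exactly at the point you flag as the hardest step. When the limit $\overline{Q}_{i}$ has hit zero before time $t$ (i.e.\ $\xi_{i}(t)=\sup\{u\le t:\overline{Q}_{i}(u)=0\}>0$), verifying \eqref{eq: fluid limit} requires showing that the contribution of the initial jobs and of the jobs that arrived \emph{before} $\xi_{i}(t)$ vanishes, i.e.\ that $\overline{\eta}_{i}^{n}(0,t)\to\infty$ so that these terms match $1-G(\infty)=0$ and $1-F_{X_{\min}}(\infty)=0$. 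The paper gets this from Fatou's lemma combined with the divergence of the fluid attained service near the depletion time ($\liminf_{n}\overline{\eta}_{i}^{n}(0,t)\ge\overline{\eta}_{i}(0,t)=\infty$). Your proposed remedy --- working with the product $Q_{i}\varphi_{i}$, invoking the monotonicity of \cite{BP-SBPS}, and excising small neighbourhoods of depletion times --- does not deliver this: excising a short time interval around $\xi_{i}(t)$ only controls the $\mathrm{d}s$-integral over that interval, but the survival indicators of initial jobs and of early arrivals depend on $\overline{\eta}_{i}^{n}(\cdot,t)$ over the whole horizon, and without the divergence argument you cannot conclude they disappear in the limit. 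Your remark that the convention $\varphi_{i}(0)=0$ ``matches the flat segment past any depletion time'' presumes the fluid stays at zero, which need not be the case (e.g.\ in overload the fluid restarts), and then the equation forces $\overline{\eta}_{i}(0,t)=\infty$; this is precisely the step missing from your plan. Also, the bound $Q_{i}\varphi_{i}\le 1$ dominates the total departure rate, not the per-job rate appearing in $\overline{\eta}_{i}^{n}$; the correct domination on intervals where $\overline{Q}_{i}$ stays positive comes from $\overline{Q}_{i}^{n}$ being eventually bounded away from zero there.

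A second, smaller issue: in step (iii) you invoke continuity of $F_{X_{\min}}$, but the theorem covers general job size distributions with atoms (deterministic, bimodal, and scaled Bernoulli all appear in the paper). The paper avoids this by a squeeze with $\pm\delta$ perturbations of $\overline{\eta}_{i}$ followed by $\delta\downarrow 0$, using that $\overline{\eta}_{i}(\cdot,t)$ is strictly decreasing where the rate is positive, so discontinuity levels of $F_{X_{\min}}$ are crossed on a Lebesgue-null set of $s$; your argument needs the same patch to be valid in the stated generality.
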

\begin{proof}
The proof is presented in~\ref{app sec: proof fluid limits}.
\end{proof}

Next, we prove a monotonicity result for the fluid limit given by Equation~\eqref{eq: fluid limit}. 

\begin{definition}
We say that the service rate per job is monotonic if 
\begin{align*}
\frac{\varphi_{i}(\overline{\boldsymbol{Q}}^{a}(t))}{\overline{Q}_{i}^{a}(t)} \geq \frac{\varphi_{i}(\overline{\boldsymbol{Q}}^{b}(t))}{\overline{Q}_{i}^{b}(t)}, ~~~ \forall \overline{\boldsymbol{Q}}^{a}(t) \leq \overline{\boldsymbol{Q}}^{b}(t) : \overline{Q}_{i}^{a}(t) > 0, 
\end{align*}
see also \cite{BP-SBPS}.
\end{definition}

\begin{lemma}
\label{lem: monotonicity initial jobs}
Let $\varphi_{i}(\cdot)$ be monotonic for all $i=1\dots,M$ and consider the fluid limits with initial conditions $\overline{\boldsymbol{Q}}^{a}(0)$ and $\overline{\boldsymbol{Q}}^{b}(0)$, where $\overline{\boldsymbol{Q}}^{a}(0) \leq \overline{\boldsymbol{Q}}^{b}(0)$, then 
$\varphi_{i}(\overline{\boldsymbol{Q}}^{a}(t)) \geq \varphi_{i}(\overline{\boldsymbol{Q}}^{b}(t))$
for all $i=1\dots,M$ and $t \geq 0$ for which $\overline{Q}_{i}^{a}(t) > 0$.
\end{lemma}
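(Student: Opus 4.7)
The plan is to establish, as the main intermediate step, that the componentwise ordering of the queue-length vectors is propagated by the fluid dynamics, i.e.\ $\overline{\boldsymbol{Q}}^{a}(t) \leq \overline{\boldsymbol{Q}}^{b}(t)$ for every $t \geq 0$. Once this is in hand, the desired inequality $\varphi_{i}(\overline{\boldsymbol{Q}}^{a}(t)) \geq \varphi_{i}(\overline{\boldsymbol{Q}}^{b}(t))$ on the set $\{\overline{Q}_{i}^{a}(t)>0\}$ is immediate: both the lowerbound rate in~\eqref{eq: service rate lowerbound} and the upperbound rate in~\eqref{eq: service rate upperbound} are reciprocals of componentwise nondecreasing functions of the queue vector, so $\varphi_{i}$ is itself componentwise non-increasing in its argument, and this combined with the monotonicity hypothesis yields the conclusion at once.

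To show ordering preservation I would run a first-crossing / continuity argument. Let $\tau := \inf\{t \geq 0 : \overline{\boldsymbol{Q}}^{a}(t) \not\leq \overline{\boldsymbol{Q}}^{b}(t)\}$ and assume for contradiction that $\tau<\infty$. Continuity of the fluid solutions then gives $\overline{\boldsymbol{Q}}^{a}(\tau) \leq \overline{\boldsymbol{Q}}^{b}(\tau)$, with equality in at least one coordinate. Throughout $[0,\tau]$ the ordering holds, so by the monotonicity of $\varphi_{i}$ one has $\varphi_{i}(\overline{\boldsymbol{Q}}^{a}(u)) \geq \varphi_{i}(\overline{\boldsymbol{Q}}^{b}(u))$ at every $u$ with $\overline{Q}_{i}^{a}(u)>0$. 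Integrating this pointwise inequality yields $\overline{\eta}_{i}^{a}(s,t) \geq \overline{\eta}_{i}^{b}(s,t)$ for all $0\leq s\leq t\leq \tau$, and substituting into the fluid equation~\eqref{eq: fluid limit}, while using that $\overline{Q}_{i}^{a}(0)\leq \overline{Q}_{i}^{b}(0)$ and that $1-G$ and $1-F_{X_{\min}}$ are non-increasing, gives $\overline{Q}_{i}^{a}(t) \leq \overline{Q}_{i}^{b}(t)$ for every $i$ and every $t\in[0,\tau]$.

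To turn this into a contradiction I would show that the set of times for which the ordering is maintained is both closed (by continuity of the fluid paths) and relatively open in $[0,\infty)$, so that it must be the entire half-line. The openness step is where the main difficulty lies, since the inequality produced by the integral fluid equation is only weak; the cleanest way to resolve this is a perturbation argument, namely to replace $\overline{\boldsymbol{Q}}^{b}(0)$ by $\overline{\boldsymbol{Q}}^{b}(0)+\varepsilon\mathbf{1}$, run the above comparison to obtain strict ordering uniformly on compact intervals, and then let $\varepsilon \downarrow 0$. The remaining care concerns coordinates with $\overline{Q}_{i}^{a}(t)=0$: the lemma's conclusion is only claimed where $\overline{Q}_{i}^{a}(t)>0$, and on the zero set one has $\varphi_{i}(\overline{\boldsymbol{Q}}^{a}(t))=0$ by convention, so the ordering along coordinate $i$ is trivially preserved until a re-entry time, at which point the fluid-equation comparison can be re-initialised from that time onwards.
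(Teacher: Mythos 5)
Your argument is correct and is essentially the paper's own proof: the paper likewise argues by contradiction from a first time at which the ordering fails, exploiting that the componentwise queue ordering yields the rate ordering (monotonicity of $\varphi_{i}$), while the rate ordering, integrated into $\overline{\eta}_{i}$ and fed through the fluid equation~\eqref{eq: fluid limit} with the non-increasing tails $1-G$ and $1-F_{X_{\text{min}}}$, propagates the queue ordering. If anything, your write-up is more careful than the paper's three-line proof, which does not explicitly address the weak-inequality/openness issue or the coordinates with $\overline{Q}_{i}^{a}(t)=0$ that you treat via the perturbation and re-initialisation steps.
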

\begin{proof}
Assume that there exists an $i \in \{1,\dots,M\}$ such that $t_{0}$ is the first time for which $\varphi_{i}(\overline{\boldsymbol{Q}}^{a}(t_{0})) < \varphi_{i}(\overline{\boldsymbol{Q}}^{b}(t_{0}))$ with $\overline{Q}_{i}^{a}(t) > 0$. Then, there must exist an $i^{*} \in \{1,\dots,M\}$ such that 
$\overline{Q}_{i^{*}}^{a}(t) > \overline{Q}_{i^{*}}^{b}(t)$
for some $0 \leq t < t_{0}$.
However, $\varphi_{i}(\overline{\boldsymbol{Q}}^{a}(t)) \geq \varphi_{i}(\overline{\boldsymbol{Q}}^{b}(t))$, for all $i=1,\dots,M$ and $0 \leq t < t_{0}$, from which it follows that 
$\overline{Q}_{i}^{a}(t) \leq \overline{Q}_{i}^{b}(t)$
for all $i=1,\dots,M$ and $0 \leq t < t_{0}$.
\end{proof}

The key property of these fluid limits is that all the components of the fluid limit corresponding to all the virtual queues remain equal at all times when starting with the same initial conditions. More importantly, even the fluid limits of the lower- and upperbound systems coincide when starting with the same initial conditions for all the virtual queues, since Equations~\eqref{eq: service rate lowerbound} and ~\eqref{eq: service rate upperbound} coincide. 

\begin{property}
\label{prop: fluid limit}
If $\overline{Q}_{i}(0)= q(0)$ for all $i=1,\dots,M$, then $\overline{Q}_{i}(t)= q(t)$ for all $i=1,\dots,M$ and $t \geq 0$, where
\begin{align*}
q(t) = q(0) (1-G(\overline{\eta}(0,t))) + \frac{\lambda}{M} \int_{s=0}^{t} (1-F_{X_{\text{min}}}(\overline{\eta}(s,t))) \text{d}s,
\end{align*}
with
\begin{align*}
\overline{\eta}(s,t) = \frac{1}{{N-1 \choose d-1}} \int_{u=s}^{t} \varphi(q(u))\text{d}u,
\end{align*}
and
\begin{align*}
\varphi(q(u)) = \frac{1}{q(u)},
\end{align*}
for $q(u) > 0$ and $\varphi(0)=0$. The expression for the attained service process follows from $\varphi_{i}(\boldsymbol{Q}(s)) = \frac{1}{{N-1 \choose d-1}q(s)}$ for all $i=1,\dots,M$.
Note that this is the fluid limit of an $M/X_{\text{min}}/1/PS$ queue with arrival rate $\lambda / M$ and server speed $1/{N-1 \choose d-1}$.
\end{property}

The proof of Property~\ref{prop: fluid limit} follows by contradiction. Assume that there exists an $i \in \{1,\dots,M\}$ such that $t_{0}$ is the first time for which $\overline{Q}_{i}(t_{0}) \neq q(t_{0})$. Then there must exist an $i^{*} \in \{1,\dots,M\}$ such that $\overline{\eta}_{i^{*}}(s,t_{1}) \neq \overline{\eta}(s,t_{1})$ with $0 \leq s \leq t_{1} < t_{0}$. However, $\overline{Q}_{i}(t)= q(t)$ for all $i=1,\dots,M$ and $t \leq t_{1}$ from which it follows $\overline{\eta}_{i}(s,t)= \overline{\eta}(s,t)$ for all $i=1,\dots,M$ and $0 \leq s \leq t \leq t_{1}$.

\subsection{Stability conditions for fluid-limit models}
\label{sec: stability conditions fluid limit}
We now turn to (in)stability conditions for the fluid-limit models, where a fluid-limit model is called stable of it reaches zero in finite time for any given initial state and unstable if it tends to infinity over time. 

\begin{lemma}
\label{lem: necessary stability fluid}
A necessary stability condition for the fluid-limit model of both the lower- and upperbound system as well as the original system is given by $\tilde{\rho}\leq 1$.
\end{lemma}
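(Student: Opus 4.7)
The plan is to exploit the symmetric-initial-condition reduction in Property~\ref{prop: fluid limit} to collapse both bound fluid limits to a single, well-studied PS queue, and then to carry the resulting instability back to the original system via the sample-path domination of Section~\ref{sec: lowerbound}. Concretely, I fix an arbitrary $q_{0} > 0$ and consider the symmetric initial condition $\overline{Q}_{i}(0) = q_{0}$ for every $i = 1,\dots,M$. Property~\ref{prop: fluid limit} then guarantees that all $M$ components of both the lower- and upperbound fluid solutions stay equal at all times and coincide with the fluid trajectory $q(t)$ of a single $M/X_{\min}/1/PS$ queue with arrival rate $\lambda/M$ and server speed $\mu := 1/\binom{N-1}{d-1}$.

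The next step is to show that this reduced fluid trajectory never reaches zero whenever $\tilde{\rho}>1$. Rather than working directly with the queue-length equation~\eqref{eq: fluid limit}, whose dependence on the service time distribution is delicate for general $X_{\min}$, I would analyze the associated fluid workload $w(t)$, which for a single PS fluid model satisfies $\dot w(t) = (\lambda/M)\,\mathbb{E}[X_{\min}] - \mu\,\mathbbm{1}_{\{w(t) > 0\}}$. Using $M = \binom{N}{d}$ and $\binom{N-1}{d-1}/\binom{N}{d} = d/N$, this drift simplifies to $\mu(\tilde{\rho}-1)$, so $w(t)$ grows at least linearly whenever $\tilde{\rho} > 1$. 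Since $q(t) = 0$ forces $w(t) = 0$, the queue length must remain strictly positive, establishing instability of both bound fluid limits from the symmetric initial state.

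To transfer this to the original system, I would invoke the sample-path comparison recalled in Section~\ref{sec: lowerbound} (Lemma~$1$ of~\cite{BP-SBPS}): each virtual queue length in the original is dominated from below by its counterpart in the lowerbound system. This ordering passes through to any subsequential fluid limit produced by Theorem~\ref{thm: fluid limit}, so the original's symmetric fluid limit dominates $q(t)$ componentwise and is likewise unstable when $\tilde{\rho}>1$, yielding necessity of $\tilde{\rho}\leq 1$ for all three fluid models. The main obstacle I anticipate is the workload step: justifying $\dot w(t) = (\lambda/M)\,\mathbb{E}[X_{\min}] - \mu\,\mathbbm{1}_{\{w(t)>0\}}$ rigorously from the customer-count equation~\eqref{eq: fluid limit} for arbitrary $X_{\min}$ requires either setting up an auxiliary residual-work representation or importing an existing $M/G/1/PS$ fluid-limit result. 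A secondary technicality is ensuring that the sample-path ordering descends cleanly to the subsequential limits of Theorem~\ref{thm: fluid limit}, but this follows from a standard almost-sure-limit-of-ordered-processes argument.
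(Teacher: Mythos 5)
Your proposal is correct and rests on the same core reduction as the paper: Property~\ref{prop: fluid limit} collapses the symmetric fluid solution of both bound systems to a single $M/X_{\text{min}}/1/PS$ fluid queue with arrival rate $\lambda/M$ and speed $1/\binom{N-1}{d-1}$, the identity $\binom{N-1}{d-1}/\binom{N}{d}=d/N$ turns the drift condition into $\tilde{\rho}$ versus $1$, and the Bonald--Prouti\`ere comparison carries instability to the original system. You deviate from the paper in two ways. First, the paper does not restrict to symmetric initial states: it takes an arbitrary initial condition and uses the monotonicity result (Lemma~\ref{lem: monotonicity initial jobs}) to bound $\min_i \overline{Q}_i(t)$ from below by the zero-start solution, which by Property~\ref{prop: fluid limit} is exactly the reduced PS fluid queue; this yields divergence of every component from every initial state, whereas your symmetric-start argument only shows non-emptying from one initial state. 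Under the paper's definition of fluid stability (reaching zero in finite time from \emph{any} initial state), your weaker conclusion still suffices for necessity of $\tilde{\rho}\le 1$, so this is a legitimate shortcut rather than a gap, but it bypasses the lemma the paper's proof is built on. Second, where the paper simply asserts that the reduced fluid limit tends to infinity when $\tilde{\rho}>1$, you make this explicit through the workload drift $(\lambda/M)\,\mathbb{E}[X_{\text{min}}]-\mu=\mu(\tilde{\rho}-1)$ together with the implication $q(t)=0\Rightarrow w(t)=0$; this is a standard and sound way to justify that step (indeed only the one-sided bound $w(t)\ge w(0)+\mu(\tilde{\rho}-1)t$ is needed, not the full ODE), though, as you note, a fully rigorous version requires a residual-work representation on top of the customer-count fluid equation~\eqref{eq: fluid limit}, a level of detail the paper itself does not supply either. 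Your use of only the lowerbound comparison for the original system is the correct direction for necessity and matches the paper's sandwich logic.
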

\begin{proof}
For the minimum of the fluid-limit components we have that
\begin{align*}
\min_{i \in \{1,\dots,M\}} \overline{Q}_{i}(t) \geq
\min_{i \in \{1,\dots,M\}} \overline{Q}_{i}^{0}(t) =
\frac{\lambda}{M} \Big( t - \int_{s=0}^{t} F_{X_{\text{min}}}(\overline{\eta}^{*}(s,t)) \text{d}s \Big),
\end{align*} 
where $\overline{Q}_{i}^{0}(t)$ denotes the fluid limit and $\overline{\eta}^{*}$ the attained service at virtual queue $i$ when starting with initial condition $0$ at all virtual queues. The inequality follows from Lemma~\ref{lem: monotonicity initial jobs} and the equality follows by Property~\ref{prop: fluid limit}. Note that the above expression holds both for the lower- and upperbound system, with the service rate per job given by Equations~\eqref{eq: service rate lowerbound} and~\eqref{eq: service rate upperbound}, respectively.

Thus, the minimum of the fluid-limit components is lower bounded by the fluid limit of an $M/X_{\text{min}}/1/PS$ queue with arrival rate $\lambda / M$ and server speed $1/{N-1 \choose d-1}$. This fluid limit goes to infinity for $\tilde{\rho}=\frac{d \lambda \mathbb{E}[\min\{X_{1},\dots,X_{d}\}]}{N} > 1$, since ${N-1 \choose d-1}/M = d/N$. 
\end{proof}

\begin{lemma}
\label{lem: sufficient stability fluid}
A sufficient stability condition for the fluid-limit model of both the lower- and upperbound system as well as the original system is given by $\tilde{\rho} < 1$.
\end{lemma}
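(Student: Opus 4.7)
The plan mirrors the argument for Lemma~\ref{lem: necessary stability fluid}, but in the opposite direction: I would use the monotonicity result to dominate the fluid solution \emph{from above} by a solution with equal initial conditions, and then collapse that dominating solution to a single $M/X_{\min}/1/PS$ fluid model via Property~\ref{prop: fluid limit}.

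\textbf{Step 1 (Upper domination via Lemma~\ref{lem: monotonicity initial jobs}).} Fix an initial condition $\overline{\boldsymbol{Q}}(0)$, set $q_{0}=\max_{i}\overline{Q}_{i}(0)$, and let $\overline{\boldsymbol{Q}}^{b}(\cdot)$ denote the fluid-model solution started from $\overline{Q}_{i}^{b}(0)=q_{0}$ for every $i$, with the same initial-residual distribution $G$. Since $\overline{\boldsymbol{Q}}(0)\leq \overline{\boldsymbol{Q}}^{b}(0)$, Lemma~\ref{lem: monotonicity initial jobs} yields $\varphi_{i}(\overline{\boldsymbol{Q}}(t))\geq \varphi_{i}(\overline{\boldsymbol{Q}}^{b}(t))$ on $\{\overline{Q}_{i}(t)>0\}$, and hence $\overline{\eta}_{i}(s,t)\geq \overline{\eta}_{i}^{b}(s,t)$ for all $0\leq s\leq t$. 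Because $1-G$ and $1-F_{X_{\min}}$ are non-increasing, substituting into the fluid Equation~\eqref{eq: fluid limit} gives $\overline{Q}_{i}(t)\leq \overline{Q}_{i}^{b}(t)$ for every $i$ and every $t\geq 0$.

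\textbf{Step 2 (Reduction via Property~\ref{prop: fluid limit}).} Since the components of $\overline{\boldsymbol{Q}}^{b}(0)$ are all equal, Property~\ref{prop: fluid limit} gives $\overline{Q}_{i}^{b}(t)=q(t)$ for every $i$, where $q(\cdot)$ is the fluid-limit solution of an $M/X_{\min}/1/PS$ queue with arrival rate $\lambda/M$ and server speed $1/{N-1\choose d-1}$. The load of this auxiliary queue equals $\tfrac{\lambda}{M}\mathbb{E}[X_{\min}]\,{N-1\choose d-1}=\tfrac{d\lambda \mathbb{E}[X_{\min}]}{N}=\tilde{\rho}$.

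\textbf{Step 3 (The auxiliary queue drains in finite time).} Under $\tilde{\rho}<1$, I would invoke the standard work-conservation argument for a PS fluid model: letting $W(t)$ denote the total unfinished work in the auxiliary queue, work arrives at rate $\tfrac{\lambda}{M}\mathbb{E}[X_{\min}]$ and is processed at the server-speed rate $1/{N-1\choose d-1}$ on $\{q>0\}$, so as long as $q(s)>0$ on $[0,t]$,
\begin{equation*}
W(t)\;=\;W(0)-\frac{1-\tilde{\rho}}{{N-1\choose d-1}}\,t.
\end{equation*}
Since $W(t)\geq 0$, this forces $q$ to hit zero by some finite time $t^{\ast}\leq {N-1\choose d-1}W(0)/(1-\tilde{\rho})$; because the net drift is strictly negative, $q\equiv 0$ thereafter. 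Combining Steps~1--3, $\overline{Q}_{i}(t)\leq q(t)=0$ for $t\geq t^{\ast}$ and every $i$, giving stability of the fluid-limit models of both the lower- and upperbound systems. The original system's queue-length process is sandwiched between those of the lower- and upperbound systems via the residual-size comparison established in Sections~\ref{sec: lowerbound} and~\ref{sec: upperbound}, so its fluid limit is likewise dominated by $q(\cdot)$ and stable.

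\textbf{Expected main obstacle.} The delicate ingredient is Step~3: formally justifying that a single $M/G/1/PS$ fluid-model solution empties in finite time when its load is strictly below one. The work-conservation identity above is classical, but boundary behaviour at the first time $q$ vanishes requires care (one must rule out re-accumulation of mass, which is where $\tilde{\rho}<1$ enters crucially). A secondary technicality is that Lemma~\ref{lem: monotonicity initial jobs} only compares service rates on $\{\overline{Q}_{i}^{a}(t)>0\}$, so the Step~1 comparison must be extended to the vanishing set by continuity; however, the very inequality $\overline{Q}_{i}(t)\leq \overline{Q}_{i}^{b}(t)$ being established shows that $\overline{\boldsymbol{Q}}$ vanishes no later than $\overline{\boldsymbol{Q}}^{b}$, rendering this extension harmless.
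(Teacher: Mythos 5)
Your proposal follows essentially the same route as the paper's proof: dominate the fluid solution from above by the solution started from equal initial components $q_{0}=\max_{i}\overline{Q}_{i}(0)$ via Lemma~\ref{lem: monotonicity initial jobs}, collapse that dominating solution via Property~\ref{prop: fluid limit} to the fluid limit of a single $M/X_{\text{min}}/1/PS$ queue with arrival rate $\lambda/M$ and server speed $1/\binom{N-1}{d-1}$ (load $\tilde{\rho}$), and conclude stability since that queue empties in finite time when $\tilde{\rho}<1$. The only difference is that your Step~3 makes explicit the work-conservation drift computation that the paper simply asserts, which is a harmless (indeed welcome) addition.
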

\begin{proof}
For the maximum of the fluid-limit components we have that
\begin{align*}
\max_{i \in \{1,\dots,M\}} \overline{Q}_{i}(t) &\leq \max_{i \in \{1,\dots,M\}} \overline{Q}_{i}^{q}(t) = \left( \max_{j \in \{1,\dots,M\}} \overline{Q}_{j}(0) \right) \cdot (1-G(\overline{\eta}^{*}(0,t))) + \frac{\lambda}{M} \Big( t - \int_{s=0}^{t} F_{X_{\text{min}}}(\overline{\eta}^{*}(s,t)) \text{d}s \Big),
\end{align*} 
where $\overline{Q}_{i}^{q}(t)$ denotes the fluid limit and $\overline{\eta}^{*}$ the attained service at virtual queue $i$ when starting with initial condition \\$q=\max_{j \in \{1,\dots,M\}} \overline{Q}_{j}(0)$ at all virtual queues. The inequality follows from Lemma~\ref{lem: monotonicity initial jobs} and the equality follows by Property~\ref{prop: fluid limit}. Once again, note that the above expression holds both for the lower- and upperbound system, with the service rate per job given by Equations~\eqref{eq: service rate lowerbound} and~\eqref{eq: service rate upperbound}, respectively.

Thus, the maximum of the fluid-limit components is upper bounded by the fluid limit of an $M/X_{\text{min}}/1/PS$ queue with arrival rate $\lambda / M$ and server speed $1/{N-1 \choose d-1}$. This fluid limit goes to zero in finite time for $\tilde{\rho}=\frac{d \lambda \mathbb{E}[\min\{X_{1},\dots,X_{d}\}]}{N} < 1$. 
\end{proof}

Now, we discuss the stability condition for the fluid-limit model of the original system in more detail.  

\begin{definition}
\label{def: NBU NWU distribution}
Consider a non-negative random variable $X$ with support denoted by $R_{X}$ and cumulative distribution function (cdf) $F_{X}(x)$. Let $\bar{F}_{X}(x) = 1-F_{X}(x)$ denote the complementary cumulative distribution function (ccdf). Then, $X$ is New-Better-than-Used (NBU) (New-Worse-than-Used (NWU)) if for all $t_{1}, t_{2} \in R_{X}$,
\begin{align}
\label{eq: NBU NWU definition}
\bar{F}_{X}(t_{1}+t_{2}) \leq (\geq) \bar{F}_{X}(t_{1}) \bar{F}_{X}(t_{2}).
\end{align}
\end{definition}
Note that 
\begin{align*}
&\mathbb{E}[\min\{X_{1},\dots,X_{d}\}] = \int_{[0,1]^{d}} \min\{F_{X}^{-1}(u_{1}),\dots,F_{X}^{-1}(u_{d})\} \text{d}C(u_{1},\dots,u_{d}),
\end{align*}
where $C$ denotes the copula model describing the dependency structure, see also~\cite{N-IC}. 
In the special case of i.i.d.\ replica sizes and a job size distribution that is NBU (NWU) we have
\begin{align*}
\mathbb{E}[\min\{X_{1},\dots,X_{d}\}] &= \int_{t=0}^{\infty} (1-F_{X}(t))^{d}\text{d}t \geq (\leq) \int_{t=0}^{\infty} (1-F_{X}(d t))\text{d}t = \frac{1}{d} \mathbb{E}[X].
\end{align*}
This implies that for i.i.d.\ replica sizes the stability threshold, compared to the stability threshold for the exponential distribution, is larger for NWU distributions and smaller for NBU distributions. 
Moreover, note that $d \mathbb{E}[\min\{X_{1},\dots,X_{d}\}]$ is increasing and decreasing in $d$ for NBU and NWU distributions and therefore the stability threshold is maximized for $d=1$ (no replication) and $d=N$ (full replication), respectively. The latter property ties in with the result proved in \cite{KR-RAGC} for a slightly different setup that for i.i.d.\ NWU job sizes full replication stochastically maximizes the departure process, and thus achieves maximum stability. 

\subsection{Stability conditions for the original stochastic system}
\label{sec: stability conditions stochastic system}

We now discuss under what assumptions the (in)stability of the original
stochastic process can be deduced from the (in)stability of the fluid limit.

In~\cite{D-FLI} it is proved that if a fluid limit is unstable,
then the original stochastic process is unstable, in the sense that the
number of jobs tends to infinity, under the assumption of service
disciplines where at most $k < \infty$ jobs are (partially) in service
at a time.
This latter assumption is not satisfied for PS.
In~\cite{M-TQNFL} a similar result is proved when the fluid limit tends
to infinity at a linear rate for any initial condition.
While the latter property seems highly plausible in view of the proof
of Lemma~2, this is essentially an issue involving a stand-alone
$M/X/1/PS$ queue, and we will not pursue a rigorous proof here,
which leaves us with the next conjecture.

\begin{conjecture}
\label{conj: necessary stability}
A necessary stability condition for redundancy-$d$ systems with PS
and general job size distributions is given by $\tilde{\rho} \leq 1$.
\end{conjecture}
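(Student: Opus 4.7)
The plan is to bridge the remaining gap between the fluid-model necessity established in Lemma~2 and the stochastic-level necessity asserted in the conjecture, via a transience criterion based on fluid limits. As the authors note, [D-FLI] does not apply because PS is not a head-of-line discipline, while [M-TQNFL] requires uniform linear divergence of every fluid trajectory from every initial state, which is a strengthening of Lemma~2.

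First, I would upgrade Lemma~2 to a uniform linear-divergence statement. The proof of Lemma~2 already gives
\[
\min_{i}\overline{Q}_{i}(t)\ \geq\ \frac{\lambda}{M}\Big(t-\int_{0}^{t}F_{X_{\text{min}}}(\overline{\eta}^{*}(s,t))\,\mathrm{d}s\Big),
\]
and the right-hand side is the fluid trajectory of a stand-alone $M/X_{\text{min}}/1/\mathrm{PS}$ queue started empty, at load $\tilde{\rho}>1$. Using a workload-balance identity for this scalar fluid (work arrives at rate $\lambda\mathbb{E}[X_{\text{min}}]/M$ and is processed at rate at most $1/\binom{N-1}{d-1}$), the unfinished fluid-work grows linearly with slope $(\tilde{\rho}-1)/\binom{N-1}{d-1}>0$. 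Because residual job sizes under PS remain stochastically tight in the fluid scaling, this forces the queue length itself to grow linearly with a positive slope $\ell=\ell(\tilde{\rho})$ that is independent of the initial state; Lemma~1 together with Property~1 then extends this lower bound to arbitrary starting configurations for every virtual queue of the original system.

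Second, I would invoke [M-TQNFL, Theorem~2.2] (or the refinement in [DH-PNFMS]) to pass from uniform linear fluid divergence to transience of the stochastic process. Applying that criterion requires the pre-limit process to be Markovian, which in turn forces the state descriptor to be the vector of residual-service measures $(\mu_{1}(t),\dots,\mu_{M}(t))$ on the $M$ virtual queues, rather than the queue-length vector $\boldsymbol{Q}(t)$ alone. Correspondingly, I would strengthen Theorem~1 from subsequential convergence of queue lengths to a full functional law of large numbers on the measure-valued state space, with every cluster point solving Equation~\eqref{eq: fluid limit} together with the service-rate couplings \eqref{eq: service rate lowerbound}--\eqref{eq: service rate upperbound}. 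Transience then gives $\sum_{i}Q_{i}(t)\to\infty$ almost surely, contradicting stability whenever $\tilde{\rho}>1$.

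The main obstacle is the measure-valued fluid-limit construction. Proving compact containment and oscillation bounds for the scaled residual-measure processes in the appropriate Skorokhod-type space, and identifying every cluster point as a solution of the fluid model, parallels the Gromoll--Puha--Williams framework for ordinary PS queues, and transporting that machinery to the present multi-class redundancy setting with dependence governed by the copula $C$ is substantial but does not appear to introduce qualitatively new phenomena. A secondary delicate point is the robustness of the linear lower bound on $\min_{i}\overline{Q}_{i}(t)/t$ under small perturbations of the initial measure-valued state, which is the exact form in which the uniformity hypothesis of [M-TQNFL] is used.
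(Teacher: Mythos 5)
The statement you are trying to prove is not proved in the paper at all: it is stated as Conjecture~1 precisely because the authors could not bridge the gap between fluid-level instability (Lemma~2) and instability of the stochastic process for PS with general job sizes. Your proposal is essentially the same program that the authors themselves sketch in Section~3.5 — combine the fluid lower bound of Lemma~2 with Meyn's transience criterion \cite{M-TQNFL} — and it leaves open exactly the two steps that make this a conjecture rather than a theorem. First, the passage from linear growth of the fluid \emph{workload} to linear growth of the fluid \emph{queue length}, with a slope uniform over all initial states, is asserted via "residual job sizes under PS remain stochastically tight in the fluid scaling"; this is not an argument, and it is precisely the "stand-alone $M/X/1/PS$" property that the paper calls highly plausible but declines to prove. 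Note also that the initial condition of the fluid model is not just a queue-length vector but includes the residual-size distribution $G$ of initial jobs, so "uniformly in the initial state" must be uniform over measure-valued initial data — Lemma~1 and Property~1, which are stated for queue-length fluid solutions with a common $G$, do not by themselves deliver this.

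Second, the invocation of \cite{M-TQNFL} (or \cite{DH-PNFMS}) is not legitimate as stated: those transience criteria are formulated for Markovian network descriptions in which at most finitely many jobs per class are in partial service (head-of-the-line type state descriptors with residual interarrival/service times). For PS with general job sizes the Markov state is necessarily measure-valued, and neither the hypotheses of Meyn's theorem nor the measure-valued functional law of large numbers you propose (a strengthening of Theorem~1 in the Gromoll--Puha--Williams style, adapted to $M$ coupled virtual queues with rates \eqref{eq: service rate lowerbound}--\eqref{eq: service rate upperbound} and copula-dependent sizes) are established anywhere; you acknowledge this but then treat it as a routine transfer, whereas the paper explicitly identifies this extension as a challenging open problem in its own right. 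So the proposal is a reasonable research outline consistent with the authors' discussion, but it does not constitute a proof: the decisive steps are exactly the ones left unproved.
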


In~\cite{D-PHRFL} it is proved that the stability of the fluid limit implies
the stability of the stochastic process for general job size distributions.
However, this result assumes \textit{head-of-the-line} service disciplines
and does not cover PS.
For exponential job sizes, the \textit{head-of-the-line} service
disciplines and PS are equivalent, and for the special cases
of identical and i.i.d.\ replicas it follows that $X_{\text{min}}$ is
exponentially distributed with mean $\mathbb{E}[X]$ and $\mathbb{E}[X]/d$,
respectively.
In this special case, we can therefore use the results in~\cite{D-PHRFL}
to obtain the (sufficient) stability condition for the stochastic
upperbound system and recover the results in~\cite{AAJV-OSR}.

Extending the result in~\cite{D-PHRFL} to non-head-of-the-line service
disciplines like PS is beyond the scope of the present paper,
and a highly challenging problem in its own right,
see also \cite{DH-PNFMS,WM-SCBS} for a more extensive discussion.
This leads to the next conjecture.

\begin{conjecture}
\label{conj: sufficient stability}
A sufficient stability condition for redundancy-$d$ systems with PS
and general job size distributions is given by $\tilde{\rho} < 1$.
\end{conjecture}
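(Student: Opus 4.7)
The plan is to extend the fluid-limit-stability-implies-stochastic-stability framework of \cite{D-PHRFL} from head-of-the-line disciplines to PS by working with an enriched measure-valued state descriptor. For each virtual queue $i$, I would represent the state at time $t$ by the empirical measure $\mu_{i}(t) = \sum_{j=1}^{Q_{i}(t)} \delta_{R_{i,j}(t)}$, where $R_{i,j}(t)$ denotes the residual service requirement of the $j$th job at virtual queue $i$. This yields a strong Markov process $\boldsymbol{\mu}(t) = (\mu_{1}(t),\dots,\mu_{M}(t))$ whose dynamics combine Poisson arrivals (rate $\lambda/M$ into class $i$, with fresh mass sampled from $F_{X_{\min}}$) with a deterministic drift at per-job rate $\varphi_{i}(\boldsymbol{Q}(t))$ as in Equations~\eqref{eq: service rate lowerbound}--\eqref{eq: service rate upperbound}. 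Since the original stochastic system is sandwiched between the lower- and upperbound systems of Sections~\ref{sec: lowerbound}--\ref{sec: upperbound}, it would then suffice to establish positive Harris recurrence of the upperbound system under $\tilde{\rho}<1$.

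The key steps, following a Foster--Lyapunov approach adapted to the measure-valued setting, would be: (i) prove tightness and a uniform oscillation bound for the fluid-scaled processes $\overline{\boldsymbol{\mu}}^{n}$; (ii) invoke Theorem~\ref{thm: fluid limit} to identify any weak limit point of $(\overline{\boldsymbol{Q}}^{n})$ as a fluid-model solution of Equation~\eqref{eq: fluid limit}; (iii) use Lemma~\ref{lem: sufficient stability fluid} together with Property~\ref{prop: fluid limit} to deduce that every fluid-model solution with total mass $\sum_{i}\overline{Q}_{i}(0)=1$ is identically zero after some deterministic time $T$ that depends only on $\tilde{\rho}$ and on $F_{X_{\min}}$; (iv) translate this into a drift inequality of the form $\mathbb{E}[\,|\boldsymbol{\mu}(T\,|\boldsymbol{\mu}(0)|)|\mid \boldsymbol{\mu}(0)\,] \le \gamma |\boldsymbol{\mu}(0)|$ with some $\gamma<1$ whenever $|\boldsymbol{\mu}(0)|$ is sufficiently large, where $|\cdot|$ denotes total mass; (v) combine this drift condition with a minorization / petite-set argument, exploiting the Poisson arrivals into every virtual queue and the absolute continuity of $F_{X_{\min}}$, to conclude positive Harris recurrence and hence stability of the stochastic system.

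The hard part, as already flagged in the discussion preceding the conjecture, is precisely step~(i) together with the non-head-of-the-line nature of PS: the descriptor $\boldsymbol{\mu}$ lives in an infinite-dimensional Polish space, and the service-rate functional $\varphi_{i}$ depends discontinuously on the queue-length marginals near the boundary where some $Q_{j}$ vanishes. Ruling out pathological mass concentration of residual service requirements, and proving tightness of the fluid-scaled measure-valued processes for general (rather than only exponential) replica-size distributions, requires control on the residual-service-time distribution that is not readily available from existing head-of-the-line machinery. Handling the coupling among the $M$ virtual queues through the minimum and maximum functionals in Equations~\eqref{eq: service rate lowerbound}--\eqref{eq: service rate upperbound}, while keeping all fluid-scaled trajectories uniformly well-behaved, is what prevents Conjecture~\ref{conj: sufficient stability} from being an immediate corollary of Lemma~\ref{lem: sufficient stability fluid} and keeps it at the level of a conjecture.
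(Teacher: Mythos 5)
There is a genuine gap, and in fact the statement you are addressing is deliberately left as a conjecture in the paper: the authors prove fluid-model stability (Lemma~\ref{lem: sufficient stability fluid}) and a weaker rigorous sufficient condition $\rho<1/d$ for the stochastic system (Theorem~\ref{thm: sufficient stability upperbound}), but they explicitly state that passing from fluid-limit stability to stochastic stability for a \emph{non-head-of-the-line} discipline such as PS with general job sizes is beyond the scope of the paper. Your proposal is a reasonable roadmap in the spirit of \cite{D-PHRFL}, but it is not a proof: steps (i), (iv) and (v) are precisely the open issues, and you concede as much in your last paragraph. Concretely, Theorem~\ref{thm: fluid limit} identifies limit points of the queue-length processes for a \emph{fixed} initial residual-size distribution $G$; to obtain the drift inequality in your step (iv) you need a fluid approximation that holds uniformly over all initial measure-valued states of bounded scaled mass (arbitrary residual-size profiles generated by the dynamics), together with uniform integrability to take expectations, and neither is supplied by the paper's fluid-limit machinery nor by existing measure-valued PS results once the service rates are coupled across the $M$ virtual queues through the minimum/maximum functionals in Equations~\eqref{eq: service rate lowerbound}--\eqref{eq: service rate upperbound}.

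Two further points in your sketch would fail as written. First, your step (v) invokes ``the absolute continuity of $F_{X_{\text{min}}}$'' for the minorization/petite-set argument, but the conjecture concerns \emph{general} job size distributions (the paper's own examples include deterministic and bimodal sizes), so absolute continuity cannot be assumed; the petite-set construction must be done without it, e.g.\ via emptiness of the system, which again requires quantitative control you have not established. Second, the sandwiching step needs care: the sample-path ordering of Lemma~1 in \cite{BP-SBPS} bounds residual sizes and sojourn times, so stability of the stochastic upperbound system does transfer to the original system, but the upperbound system is itself a coupled PS network of the same non-head-of-the-line type, so the reduction does not buy you any simplification of the hard step. In short, your write-up correctly identifies why the statement is hard, but it establishes nothing beyond what Lemma~\ref{lem: sufficient stability fluid} already gives, and the conjecture remains open.
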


We now turn attention to an alternative upperbound system that is
analytically tractable and provides a rigorous sufficient stability
condition for the original stochastic system.
Specifically, consider a system where all the $d$~replicas are fully
served, as opposed to at least one server (out of the $d$~sampled servers)
fully completing the replica, see also \cite[Section 6.2.2]{AAJV-OSR}. 
Each individual server in this alternative system can be viewed
as an $M/X/1/PS$ queue with arrival rate $\frac{d \lambda}{N}$
and expected job size $\mathbb{E}[X]$. 
 
\begin{remark}
\label{rem: tightness upperbound independent}
For $d=1$ and $d=N$ this upperbound system is exactly the same as the original system. 
\end{remark} 
 
\begin{theorem}
\label{thm: sufficient stability upperbound}
A sufficient stability condition for redundancy-$d$ systems with the processor-sharing discipline and general job size distributions is given by $\rho < \frac{1}{d}$, with $\rho = \frac{\lambda \mathbb{E}[X]}{N}$.
\end{theorem}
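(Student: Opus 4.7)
The plan is to exploit the alternative upperbound system introduced just before the theorem, in which the first-completion abandonment rule is dropped and all $d$ replicas are served to completion at the $d$ sampled servers. I would first argue, via a sample-path coupling, that this enlarged system stochastically dominates the original redundancy-$d$ system. Replacing the cancel-on-completion rule by full service can only enlarge the residual work at each server, so by the monotonicity of processor-sharing networks (Lemma~$1$ of \cite{BP-SBPS}, already invoked in Sections~\ref{sec: lowerbound} and~\ref{sec: upperbound}) the queue-length vector in the original system is dominated path-wise by the queue-length vector of this alternative system. Hence stability of the alternative system implies stability of the original one.

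Next I would decouple the alternative system into $N$ marginal processes. Since replicas are dispatched uniformly at random without replacement to $d$ of the $N$ servers, any particular server is targeted by each incoming job with probability $d/N$. By Poisson thinning, the arrival stream into any individual server is a Poisson process of rate $d\lambda/N$, and the associated replica sizes are distributed as the generic $X$ with mean $\mathbb{E}[X]$. Crucially, because every replica is now served to completion and a server under PS shares its capacity only on the basis of the jobs currently present at that server, each server, viewed in isolation, operates as a stand-alone $M/X/1/PS$ queue, even though the arrival streams to distinct servers are correlated.

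Finally, I would invoke the classical insensitivity and stability result for the $M/G/1/PS$ queue: such a queue is stable for general service time distributions if and only if its offered load is strictly less than one. Here the marginal load at each server equals
\begin{align*}
\frac{d\lambda}{N}\,\mathbb{E}[X] = d\rho,
\end{align*}
so the condition $\rho < 1/d$ is exactly the condition for each of the $N$ marginal $M/X/1/PS$ queues to be stable. Since every server in the alternative upperbound system is stable, the vector of queue lengths in that system is tight, and by the sample-path domination from the first step so is the queue-length vector in the original redundancy-$d$ system.

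The main obstacle I expect is the first step: carefully formalizing the pathwise domination in the presence of correlated replica arrivals and of the PS discipline. The monotone-PS-networks machinery used for the lower- and upperbound systems of Sections~\ref{sec: lowerbound}--\ref{sec: upperbound} is exactly what is needed, but some care is required to check that simply removing the abandonment events (rather than modifying service rates) preserves the monotonicity hypotheses. Once this coupling is in place, the remaining steps are standard thinning and $M/G/1/PS$ facts.
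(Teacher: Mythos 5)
Your proposal is correct and follows essentially the same route as the paper: comparison with the alternative upperbound system in which all $d$ replicas are fully served, so that each server behaves as an $M/X/1/PS$ queue with arrival rate $d\lambda/N$ and mean job size $\mathbb{E}[X]$, stable when $d\rho<1$. You merely spell out the coupling and thinning details that the paper leaves implicit in its one-line proof.
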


\begin{proof}
Follows by comparison of the original system with the $M/X/1/PS$ queue.
\end{proof}

Note that for the special case of identical replicas this was already
proved in~\cite[Proposition~18]{AAJV-OSR}.

In general the sufficient stability condition
in Theorem~\ref{thm: sufficient stability upperbound} does not coincide
with the necessary stability condition as stated
in Conjecture~\ref{conj: necessary stability}.
However, for identical replicas it does, since in that case
$d \rho = \tilde{\rho}$, resulting in the stability condition
$\tilde{\rho} = \frac{d \mathbb{E}[X]}{N}<1$.
Here, the PS discipline yields the worst possible stability condition
among all work-conserving service disciplines, since the stability
condition corresponds to that in a system where all replicas are fully
served as is the case in the above alternative upperbound system.

\begin{remark}
\label{rem: comparing FCFS with PS scaled bernoulli}
For independent exponentially distributed job sizes the stability
conditions for FCFS and PS are the same, whereas this is not the case for scaled Bernoulli distributed job sizes.
Let the job have sizes $0$ and $K$ with probability $p=1-1/K$ and $1-p$, respectively. Then, the asymptotic stability condition for FCFS is given by $\lambda / K^{d-1} < 1$, see~\cite{RBB-RSSB}.
The stability condition for PS is $(\lambda d) / (N K^{d-1}) < 1$, which is
larger than the stability threshold for FCFS.
\end{remark}

\section{Numerical results}
\label{sec: numerical results}
In Section~\ref{subsec: accuracy bounds} the accuracy of the bounds for both identical and i.i.d.\ replicas is studied. In Section~\ref{subsec: near insensitivity identical replicas} we show near-insensitivity of the expected latency for identical replicas. In Section~\ref{subsec: load} we examine the delay performance as function of the load. 

\begin{table}[htbp]
\centering
\tabcolsep=0.11cm
\caption{The various job size distributions with $\mathbb{E}[X]=2$.}
\label{tab: distributions}
\begin{tabular}{|l|c|l|}
\hline
Distribution & Variance & Description \\ \hline
Deterministic & $0$ & point mass at $2$ \\ \hline
\multirow{2}{*}{Erlang2} & \multirow{2}{*}{$2$} & sum of two exponential random \\
& & variables with mean $1$ \\ \hline
Exponential & $4$ & \\ \hline
\multirow{2}{*}{Bimodal-1} & \multirow{2}{*}{$9$} & takes values $1$ and $11$ with probability  \\ 
& & $0.9$ and $0.1$, respectively \\ \hline
\multirow{2}{*}{Weibull-1} & \multirow{2}{*}{$20$} & shape parameter $=\frac{1}{2}$ and \\ 
& & scale parameter $=1$ (heavy-tailed) \\ \hline
\multirow{2}{*}{Weibull-2 }& \multirow{2}{*}{$76$} & shape parameter $=\frac{1}{3}$ and  \\ 
& & scale parameter $=\frac{1}{3}$ (heavy-tailed) \\ \hline
\multirow{2}{*}{Bimodal-2} & \multirow{2}{*}{$90$} & takes values $1$ and $101$ with probability \\
 & & $0.99$ and $0.01$, respectively \\ \hline
\end{tabular}
\end{table}

\subsection{Accuracy bounds}
\label{subsec: accuracy bounds}
In Figure~\ref{fig: bounds identical replicas} the expected latency for the lowerbound system, upperbound system and original system with $N=4$ (homogeneous) servers and $d=2$ identical replicas is depicted for various job size distributions. Only the alternative upperbound system from Section~\ref{sec: stability conditions stochastic system} is analytically tractable and the expected latency in the other systems is obtained via simulation. 
For identical replicas it can be seen that the lowerbound is quite accurate. Moreover, note that the expected latency in the upperbound system is only dependent on the mean of the job size distribution and not its higher moments. 
At first instance, it seems that the expected latency is insensitive to the job size distribution, but in Section~\ref{subsec: near insensitivity identical replicas} we show that the expected latency in fact slightly differs for the various job size distributions, which is called near-insensitivity. 

\begin{figure}[htbp]
  \centering
    \resizebox{0.49\textwidth}{!}{\newcommand{\dataFigure}{TikZFigures/Redundancy_PS_identical_N4_d2.csv}
\newcommand{\dataFigureLowerbound}{TikZFigures/Redundancy_PS_identical_N4_d2_lowerbound.csv}
\newcommand{\dataFigureUpperbound}{TikZFigures/Redundancy_PS_identical_N4_d2_upperbound.csv}

\begin{tikzpicture}
		\begin{axis}[
			xlabel=$\lambda$,
			ylabel=Expected latency,
			ymin=0,
			ymax=8,
			no markers,
			legend pos= outer north east,
			legend cell align=left]
		\addlegendimage{empty legend}
		\addplot+ table [x=lambda, y=Deterministic, col sep=comma]{\dataFigure};
		\addplot+ table [x=lambda, y=Erlang2, col sep=comma]{\dataFigure};
		\addplot+ table [x=lambda, y=Exponential, col sep=comma]{\dataFigure};
		\addplot+ table [x=lambda, y=Bimodal-1, col sep=comma]{\dataFigure};
		\addplot+ table [x=lambda, y=Weibull-1, col sep=comma]{\dataFigure};
		\addplot+ table [x=lambda, y=Weibull-2, col sep=comma]{\dataFigure};
		\addplot+ table [x=lambda, y=Bimodal-2, col sep=comma]{\dataFigure};
		
		\pgfplotsset{cycle list shift=-7}
		\addplot+[dashed] table [x=lambda, y=Deterministic, col sep=comma]{\dataFigureLowerbound};
		\addplot+[dashed] table [x=lambda, y=Erlang2, col sep=comma]{\dataFigureLowerbound};
		\addplot+[dashed] table [x=lambda, y=Exponential, col sep=comma]{\dataFigureLowerbound};
		\addplot+[dashed] table [x=lambda, y=Bimodal-1, col sep=comma]{\dataFigureLowerbound};
		\addplot+[dashed] table [x=lambda, y=Weibull-1, col sep=comma]{\dataFigureLowerbound};
		\addplot+[dashed] table [x=lambda, y=Weibull-2, col sep=comma]{\dataFigureLowerbound};
		\addplot+[dashed] table [x=lambda, y=Bimodal-2, col sep=comma]{\dataFigureLowerbound};
		
		\addplot+[dashed, black] table [x=lambda, y=all, col sep=comma]{\dataFigureUpperbound};
		\addlegendentry{Distributions:}
		\addlegendentry{Deterministic}
		\addlegendentry{Erlang2}
		\addlegendentry{Exponential}
		\addlegendentry{Bimodal-$1$}
		\addlegendentry{Weibull-$1$}
		\addlegendentry{Weibull-$2$}
		\addlegendentry{Bimodal-$2$}
		\end{axis}
\end{tikzpicture}}
    \resizebox{0.49\textwidth}{!}{\newcommand{\dataFigure}{TikZFigures/Redundancy_PS_identical_N4_d3.csv}
\newcommand{\dataFigureLowerbound}{TikZFigures/Redundancy_PS_identical_N4_d3_lowerbound.csv}
\newcommand{\dataFigureUpperbound}{TikZFigures/Redundancy_PS_identical_N4_d3_upperbound.csv}

\begin{tikzpicture}
		\begin{axis}[
			xlabel=$\lambda$,
			ylabel=Expected latency,
			ymin=0,
			ymax=8,
			no markers,
			legend pos= outer north east,
			legend cell align=left]
		\addlegendimage{empty legend}
		\addplot+ table [x=lambda, y=Deterministic, col sep=comma]{\dataFigure};
		\addplot+ table [x=lambda, y=Erlang2, col sep=comma]{\dataFigure};
		\addplot+ table [x=lambda, y=Exponential, col sep=comma]{\dataFigure};
		\addplot+ table [x=lambda, y=Bimodal-1, col sep=comma]{\dataFigure};
		\addplot+ table [x=lambda, y=Weibull-1, col sep=comma]{\dataFigure};
		\addplot+ table [x=lambda, y=Weibull-2, col sep=comma]{\dataFigure};
		\addplot+ table [x=lambda, y=Bimodal-2, col sep=comma]{\dataFigure};
		
		\pgfplotsset{cycle list shift=-7}
		\addplot+[dashed] table [x=lambda, y=Deterministic, col sep=comma]{\dataFigureLowerbound};
		\addplot+[dashed] table [x=lambda, y=Erlang2, col sep=comma]{\dataFigureLowerbound};
		\addplot+[dashed] table [x=lambda, y=Exponential, col sep=comma]{\dataFigureLowerbound};
		\addplot+[dashed] table [x=lambda, y=Bimodal-1, col sep=comma]{\dataFigureLowerbound};
		\addplot+[dashed] table [x=lambda, y=Weibull-1, col sep=comma]{\dataFigureLowerbound};
		\addplot+[dashed] table [x=lambda, y=Weibull-2, col sep=comma]{\dataFigureLowerbound};
		\addplot+[dashed] table [x=lambda, y=Bimodal-2, col sep=comma]{\dataFigureLowerbound};
		
		\addplot+[dashed, black] table [x=lambda, y=all, col sep=comma]{\dataFigureUpperbound};
		\addlegendentry{Distributions:}
		\addlegendentry{Deterministic}
		\addlegendentry{Erlang2}
		\addlegendentry{Exponential}
		\addlegendentry{Bimodal-$1$}
		\addlegendentry{Weibull-$1$}
		\addlegendentry{Weibull-$2$}
		\addlegendentry{Bimodal-$2$}
		\end{axis}
\end{tikzpicture}}
    \caption{The expected latency in the original system and in the lower- and upperbound systems (dashed lines) for PS with identical replicas, $N=4$, $\mathbb{E}[X]=2$, various job size distributions (see Table~\ref{tab: distributions}) and $d=2$ (left) and $d=3$ (right).
    \label{fig: bounds identical replicas}}
\end{figure}
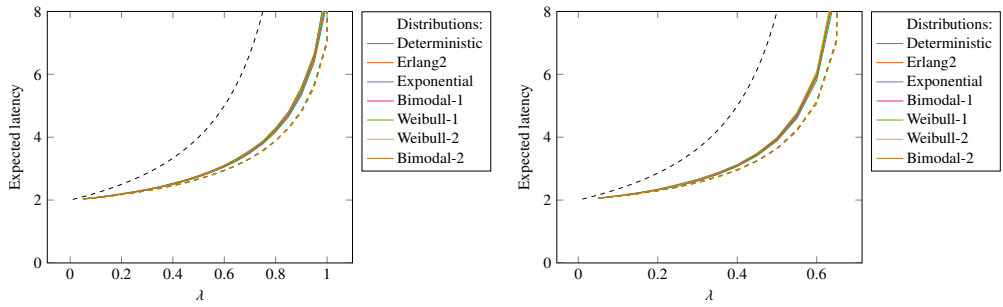

In Figure~\ref{fig: bounds independent replicas} the expected latency in the lowerbound system, upperbound system and original system with $N=4$ (homogeneous) servers and $d=2$ i.id.\ replicas is depicted for various job size distributions.
Note that the lowerbound for deterministic job sizes is tight, see also Remark~\ref{rem: tightness upperbound independent}.

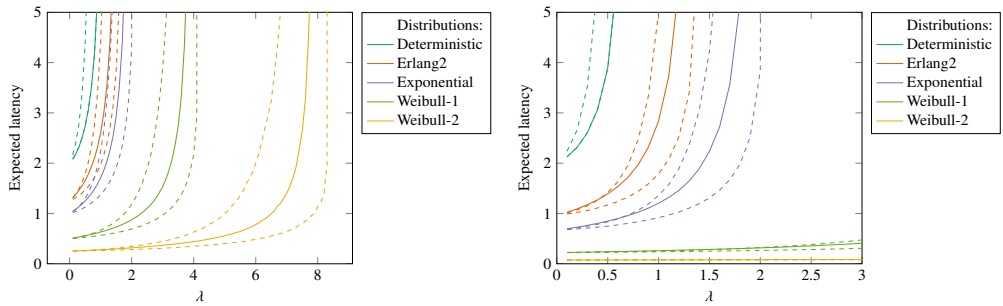
\begin{figure}[htbp]
  \centering
    \resizebox{0.49\textwidth}{!}{\newcommand{\dataFigure}{TikZFigures/Redundancy_PS_independent_N4_d2.csv}
\newcommand{\dataFigureLowerbound}{TikZFigures/Redundancy_PS_independent_N4_d2_lowerbound.csv}
\newcommand{\dataFigureUpperbound}{TikZFigures/Redundancy_PS_independent_N4_d2_upperbound.csv}

\begin{tikzpicture}
		\begin{axis}[
			xlabel=$\lambda$,
			ylabel=Expected latency,
			ymin=0,
			ymax=5,
			no markers,
			legend pos= outer north east,
			legend cell align=left]
		\addlegendimage{empty legend}
		\addplot+ table [x=lambda, y=Deterministic, col sep=comma]{\dataFigure};
		\addplot+ table [x=lambda, y=Erlang2, col sep=comma]{\dataFigure};
		\addplot+ table [x=lambda, y=Exponential, col sep=comma]{\dataFigure};
		\pgfplotsset{cycle list shift=1}
		\addplot+ table [x=lambda, y=Weibull-1, col sep=comma]{\dataFigure};
		\addplot+ table [x=lambda, y=Weibull-2, col sep=comma]{\dataFigure};
		
		 \pgfplotsset{cycle list shift=-5}
		\addplot+[dashed] table [x=lambda, y=Deterministic, col sep=comma]{\dataFigureLowerbound};
		\addplot+[dashed] table [x=lambda, y=Erlang2, col sep=comma]{\dataFigureLowerbound};
		\addplot+[dashed] table [x=lambda, y=Exponential, col sep=comma]{\dataFigureLowerbound};
		\pgfplotsset{cycle list shift=-4}
		\addplot+[dashed] table [x=lambda, y=Weibull-1, col sep=comma]{\dataFigureLowerbound};
		\addplot+[dashed] table [x=lambda, y=Weibull-2, col sep=comma]{\dataFigureLowerbound};
		
		\pgfplotsset{cycle list shift=-10}
		\addplot+[dashed] table [x=lambda, y=Deterministic, col sep=comma]{\dataFigureUpperbound};
		\addplot+[dashed] table [x=lambda, y=Erlang2, col sep=comma]{\dataFigureUpperbound};
		\addplot+[dashed] table [x=lambda, y=Exponential, col sep=comma]{\dataFigureUpperbound};
		\pgfplotsset{cycle list shift=-9}
		\addplot+[dashed] table [x=lambda, y=Weibull-1, col sep=comma]{\dataFigureUpperbound};
		\addplot+[dashed] table [x=lambda, y=Weibull-2, col sep=comma]{\dataFigureUpperbound};

		\addlegendentry{Distributions:}
		\addlegendentry{Deterministic}
		\addlegendentry{Erlang2}
		\addlegendentry{Exponential}
		\addlegendentry{Weibull-$1$}
		\addlegendentry{Weibull-$2$}
	\end{axis}
\end{tikzpicture}}
    \resizebox{0.49\textwidth}{!}{\newcommand{\dataFigure}{TikZFigures/Redundancy_PS_independent_N4_d3.csv}
\newcommand{\dataFigureLowerbound}{TikZFigures/Redundancy_PS_independent_N4_d3_lowerbound.csv}
\newcommand{\dataFigureUpperbound}{TikZFigures/Redundancy_PS_independent_N4_d3_upperbound.csv}

\begin{tikzpicture}
		\begin{axis}[
			xlabel=$\lambda$,
			ylabel=Expected latency,
			ymin=0,
			ymax=5,
			xmin=0,
			xmax=3,
			no markers,
			legend pos= outer north east,
			legend cell align=left]
		\addlegendimage{empty legend}
		\addplot+ table [x=lambda, y=Deterministic, col sep=comma]{\dataFigure};
		\addplot+ table [x=lambda, y=Erlang2, col sep=comma]{\dataFigure};
		\addplot+ table [x=lambda, y=Exponential, col sep=comma]{\dataFigure};
		\pgfplotsset{cycle list shift=1}
		\addplot+ table [x=lambda, y=Weibull-1, col sep=comma]{\dataFigure};
		\addplot+ table [x=lambda, y=Weibull-2, col sep=comma]{\dataFigure};
		
		 \pgfplotsset{cycle list shift=-5}
		\addplot+[dashed] table [x=lambda, y=Deterministic, col sep=comma]{\dataFigureLowerbound};
		\addplot+[dashed] table [x=lambda, y=Erlang2, col sep=comma]{\dataFigureLowerbound};
		\addplot+[dashed] table [x=lambda, y=Exponential, col sep=comma]{\dataFigureLowerbound};
		\pgfplotsset{cycle list shift=-4}
		\addplot+[dashed] table [x=lambda, y=Weibull-1, col sep=comma]{\dataFigureLowerbound};
		\addplot+[dashed] table [x=lambda, y=Weibull-2, col sep=comma]{\dataFigureLowerbound};
		
		\pgfplotsset{cycle list shift=-10}
		\addplot+[dashed] table [x=lambda, y=Deterministic, col sep=comma]{\dataFigureUpperbound};
		\addplot+[dashed] table [x=lambda, y=Erlang2, col sep=comma]{\dataFigureUpperbound};
		\addplot+[dashed] table [x=lambda, y=Exponential, col sep=comma]{\dataFigureUpperbound};
		\pgfplotsset{cycle list shift=-9}
		\addplot+[dashed] table [x=lambda, y=Weibull-1, col sep=comma]{\dataFigureUpperbound};
		\addplot+[dashed] table [x=lambda, y=Weibull-2, col sep=comma]{\dataFigureUpperbound};

		\addlegendentry{Distributions:}
		\addlegendentry{Deterministic}
		\addlegendentry{Erlang2}
		\addlegendentry{Exponential}
		\addlegendentry{Weibull-$1$}
		\addlegendentry{Weibull-$2$}
	\end{axis}
\end{tikzpicture}}
    \caption{The expected latency in the original system and in the lower- and upperbound systems (dashed lines) for PS with i.i.d. replicas, $N=4$, $\mathbb{E}[X]=2$, various job size distributions (see Table~\ref{tab: distributions}) and $d=2$ (left) and $d=3$ (right).
    \label{fig: bounds independent replicas}}
\end{figure}

\subsection{Near-insensitivity identical replicas}
\label{subsec: near insensitivity identical replicas}
It is well known that the expected latency in an ordinary $M/G/1/PS$ system is insensitive to the job size distribution given its mean. In this subsection our aim is to show that the expected latency is nearly insensitive to the job size distribution in redundancy-$d$ systems with processor sharing and identical replicas. To support this claim, we consider the system with various job size distributions. We run the simulation $50$ times, where each run consists of $10^{7}$ arrivals. See also \cite{GHBSW-AJSQ} in which the authors observe a similar near-insensitivity for an $M/G/N/JSQ/PS$ system. 

In Figure~\ref{fig: near insensitivity} it can be seen that the $95$\% confidence intervals for the expected latency differ and are not overlapping for the various job size distributions. However, looking at the vertical axis of the figure, we conclude that the expected latency only differs by $0.05$, which is approximately 1.5\%, between the job size distributions with the lowest and highest variance. 

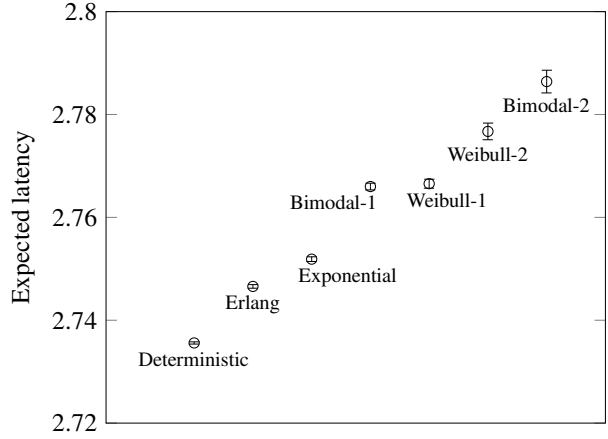
\begin{figure}[htbp]
  \centering
	\resizebox{0.6\textwidth}{!}{\newcommand{\dataFigure}{TikZFigures/Redundancy_PS_identical_N4_ML.csv}

\begin{tikzpicture}
\begin{axis}[
	ymin=2.72,
	ymax=2.8,
	xmin=-0.5,
	xmax=8,
	ylabel=Expected latency,
	xtick=\empty
]

\addplot+ [black, only marks, mark=o,error bars/.cd,y explicit,y dir=both] 
    table [col sep=comma,x=x,y=0.5m,
          y error plus expr=\thisrow{0.5u}-\thisrow{0.5m},
          y error minus expr=\thisrow{0.5m}-\thisrow{0.5l}]{\dataFigure}
node[pos=0.0, below]{\footnotesize Deterministic}
node[pos=0.125, below]{\footnotesize Erlang}
node[pos=0.25, below, xshift=0.5cm]{\footnotesize Exponential}
node[pos=0.5, below, xshift=-0.5cm]{\footnotesize Bimodal-1}  
node[pos=0.625, below, xshift=0.25cm]{\footnotesize Weibull-1} 
node[pos=0.75, below, yshift=-0.1cm]{\footnotesize Weibull-2} 
node[pos=1.0, below, yshift=-0.1cm]{\footnotesize Bimodal-2}; 

\end{axis}
\end{tikzpicture}}
    \caption{Near-insensitivity results for PS with identical replicas, $N=4$, $d=2$, $\tilde{\rho}=0.5$ and various job size distributions (see Table~\ref{tab: distributions}).}
    \label{fig: near insensitivity}
\end{figure}

\subsection{Impact of load on delay performance}
\label{subsec: load}
In Section~\ref{sec: stability conditions stochastic system} the necessary and sufficient stability conditions for the PS discipline are conjectured. However, when fixing the load $\tilde{\rho}$, the expected latency still differs when varying $d$ and $N$. In Figure~\ref{fig: load} we fix the load, so for identical replicas the arrival rate is different for each value of $d$ and therefore does not provide a fair comparison regarding the performance. Since $\mathbb{E}[T] \approx \mathbb{E}[X] = 1$ we infer that, especially for identical replicas, even with a relatively high load of $\tilde{\rho}=0.75$ for $d=N/2$ (most of the time) jobs do not experience delay from other jobs. Even for $\tilde{\rho}=0.95$, which would be considered heavy traffic in the classical setting, the expected latency $\mathbb{E}[T]$ is approximately equal to $2$ for values of $d$ between $3$ and $7$, while for $d=1$ and $d=N=10$, i.e., equivalent to the classical setting, the expected latency $\mathbb{E}[T]=20$. Observe that for identical replicas, the expected latency varies significantly more with the value of $d$ than for i.i.d.\ replicas. 

\begin{figure}[htbp]
  \centering
      \resizebox{0.49\textwidth}{!}{\newcommand{\dataFigure}{TikZFigures/Redundancy_PS_identical_load.csv}

\begin{tikzpicture}
		\begin{axis}[
			xlabel=$d$,
			ylabel=Expected latency,
			ymin=0,
			ymax=10,
			legend pos= outer north east,
			legend cell align=left]
		\addlegendimage{empty legend}
		\addplot+[mark=*, dashed] table [x=d, y=0.5, col sep=comma]{\dataFigure};
		\addplot+[mark=*, dashed] table [x=d, y=0.75, col sep=comma]{\dataFigure};
		\addplot+[mark=*, dashed] table [x=d, y=0.95, col sep=comma]{\dataFigure};
		
		\addlegendentry{Load:}
		\addlegendentry{$\tilde{\rho}=0.5$}
		\addlegendentry{$\tilde{\rho}=0.75$}
		\addlegendentry{$\tilde{\rho}=0.95$}
	\end{axis}
\end{tikzpicture}}
      \resizebox{0.49\textwidth}{!}{\newcommand{\dataFigure}{TikZFigures/Redundancy_PS_independent_load.csv}

\begin{tikzpicture}
		\begin{axis}[
			xlabel=$d$,
			ylabel=Expected latency,
			ymin=0,
			ymax=20,
			legend pos= outer north east,
			legend cell align=left]
		\addlegendimage{empty legend}
		\addplot+[mark=*, dashed] table [x=d, y=0.5, col sep=comma]{\dataFigure};
		\addplot+[mark=*, dashed] table [x=d, y=0.75, col sep=comma]{\dataFigure};
		\addplot+[mark=*, dashed] table [x=d, y=0.95, col sep=comma]{\dataFigure};
		
		\addlegendentry{Load:}
		\addlegendentry{$\tilde{\rho}=0.5$}
		\addlegendentry{$\tilde{\rho}=0.75$}
		\addlegendentry{$\tilde{\rho}=0.95$}
	\end{axis}
\end{tikzpicture}}
    \caption{Simulated expected latency for PS with $N=10$, exponential job sizes $X$ with $\mathbb{E}[X]=1$, various loads and identical replicas (left) and i.i.d.\ replicas (right).}
    \label{fig: load}
\end{figure}
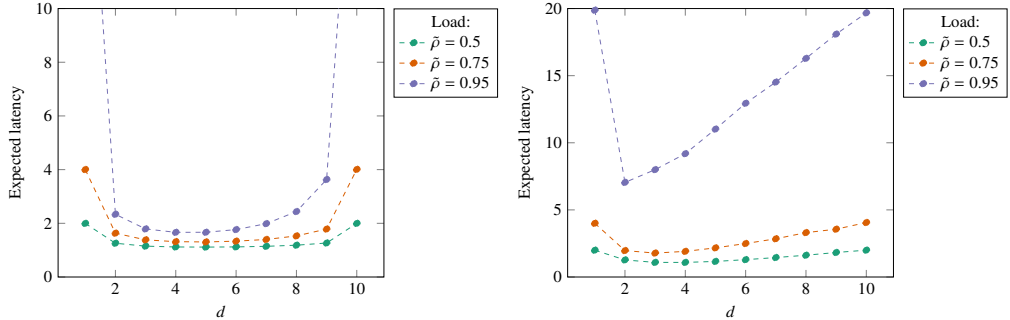

\section{Conclusion}
\label{sec: conclusion}
In this paper we studied the stability condition for redundancy-$d$ systems where all servers follow a processor-sharing discipline. We allow for generally distributed job sizes with possible dependence among the replica sizes of a job being governed by some joint distribution. The stability condition for the associated fluid-limit model is characterized by the expectation of the minimum of $d$ replica sizes being less than the mean interarrival time per server. For identical replicas the stability condition is insensitive to the job size distribution given its mean. Moreover, the stability threshold is inversely proportional to the number of replicas. Thus, in this case a higher degree of redundancy reduces stability. 
For i.i.d.\ replicas the stability threshold decreases (increases) in the number of replicas for job size distributions that are NBU (NWU).  

A natural topic for further research would be to allow for heterogeneous server speeds. In this scenario the lower- and upperbound systems from Sections~\ref{sec: lowerbound} and~\ref{sec: upperbound} continue to apply. Again, we could consider the fluid limits of these systems. Observe that the heterogeneity of the servers is reflected in the service rate per job and thus in the attained service process. Therefore, Property~\ref{prop: fluid limit} that played a key role in establishing the stability condition of the fluid models is not valid anymore. Moreover, it is no longer true that the fluid limits of the lower- and upperbound systems coincide when starting with the same initial conditions for all the virtual queues. 
Our simulations of the lower- and upperbound systems with heterogeneous server speeds reveal that the stability conditions do not coincide, which means that (most probably) the stability conditions of the fluid models do not coincide either. In addition, we observed that for heterogeneous server speeds the stability condition of the original stochastic system is sensitive, i.e., not only dependent on the expectation of the minimum of $d$ replica sizes.
For the PS discipline, the model of heterogeneous server speeds could be even further generalized to also include various job types, each with their own server speed realizations. One example in such a direction is the S\&X model proposed in \cite{GHBSW-DSSJS}. 

The stability condition for FCFS and ROS in the case of general job size distributions is still an open problem. By simulation we observed that the stability condition for PS gives a reasonable approximation for the stability condition for FCFS. Moreover, in all the simulations, we observed that for i.i.d.\ replicas the stability threshold for PS is smaller (larger) than the stability threshold when the job size distribution is NBU (NWU). Note that this observation is in agreement with Remark~\ref{rem: comparing FCFS with PS scaled bernoulli}. Intuitively, this could be explained since for the PS discipline every replica starts at the same time and the queue lengths are approximately equal. Thus for a job size distribution that is NBU (NWU) the PS discipline maximizes the wastage (improvement) of the server capacity. For the FCFS discipline this is not necessarily the case since not all replicas start at the same time or start service at all. Rigorous proofs of these statements remain as a challenge for further research. 

\section*{Acknowledgments}
\label{sec: acknowlegdements}
The work in this paper is supported by the Netherlands Organisation for Scientific Research (NWO) through Gravitation grant NETWORKS 024.002.003.
The authors gratefully acknowledge insightful discussions with Bert Zwart. 




\appendix
\section{Appendix}

\subsection{Proof fluid limits}
\label{app sec: proof fluid limits}
The proof of Theorem~\ref{thm: fluid limit} is very similar to the proof of Theorem~5.2.1 in \cite{E-STTPS}. The latter proof only relies on the property that $\overline{\eta}^{n}_{i}(s,t)$ is decreasing in $s$ and $\overline{\eta}(\cdot,t)$ is continuous on $[\xi_{i}(t)+\epsilon,t]$, where $\xi_{i}(t) = \sup(u \in [0,t] : \overline{Q}_{i}(u) = 0)$. In order to keep the paper self-contained, we give here the proof.

Applying the fluid scaling to each term in Equation~\eqref{eq: virtual queue length process} gives
\begin{align*}
\overline{Q}_{i}^{n}(t) = \frac{1}{n}\sum_{l=1}^{n \overline{Q}_{l}^{n}(0)} \mathbbm{1}_{\{v'_{il} > \overline{\eta}_{i}^{n}(0,t)\}} + \frac{1}{n}\sum_{k=1}^{A_{i}^{n}(t)} \mathbbm{1}_{\{v_{ik} > \overline{\eta}_{i}^{n}(U_{ik}^{n},t)\}} := I_{i}^{n} + J_{i}^{n}.
\end{align*}
We now proceed to derive $\lim_{n \rightarrow \infty} \overline{Q}_{i}^{n}(t)$, and distinguish two cases depending on whether $\overline{Q}_{i}(u) > 0$ for all $u \in [0,t]$ or not.
Let $\xi_{i}(t) = \sup(u \in [0,t] : \overline{Q}_{i}(u)=0)$. It is useful to distinguish two further cases, depending on whether $\xi_{i}(t) < t$ or $\xi_{i}(t) = t$. We start with the former case, and fix $\epsilon > 0$ such that $\xi_{i}(t) + \epsilon < t$. 
Now 
\begin{align*}
J_{i}^{n} &= \frac{1}{n}\sum_{k=1}^{A_{i}^{n}(\xi_{i}(t) + \epsilon)} \mathbbm{1}_{\{v_{ik} > \overline{\eta}_{i}^{n}(U_{ik}^{n},t)\}} \\
&+ \frac{1}{n}\sum_{k=A_{i}^{n}(\xi_{i}(t) + \epsilon)+1}^{A_{i}^{n}(t)} \mathbbm{1}_{\{v_{ik} > \overline{\eta}_{i}^{n}(U_{ik}^{n},t)\}} := J_{1,i}^{n} + J_{2,i}^{n}.
\end{align*}
We first determine $\lim_{n \rightarrow \infty} J_{2,i}^{n}$. By definition $\overline{Q}_{i}(u) > 0$ for all $[\xi_{i}(t) + \epsilon, t]$. Hence, the bounded convergence theorem yields
\begin{align*}
\lim_{n \rightarrow \infty} \overline{\eta}_{i}^{n}(u,v) = \overline{\eta}_{i}(u,v),
\end{align*}
for all $t \geq v \geq u \geq \xi_{i}(t) + \epsilon$. Since $\overline{\eta}_{i}^{n}(s,t)$ is decreasing in $s$ and $\overline{\eta}_{i}(\cdot,t)$ is continuous on $[\xi_{i}(t) + \epsilon, t]$, the convergence is uniform on $[\xi_{i}(t) + \epsilon, t]$, i.e., for any $\delta>0$ there exists an $n_{\delta}$ such that
\begin{align}
\label{app eq: uniform convergence eta}
\sup_{s \in [\xi_{i}(t) + \epsilon, t]} |\overline{\eta}_{i}^{n}(s,t) - \overline{\eta}_{i}(s,t)| \leq \delta, ~~~ \text{ for all } n \geq n_{\delta}.
\end{align}

We partition the interval $[\xi_{i}(t) + \epsilon, t]$ into $N_{1}$ subintervals $[t_{j-1}^{N_{1}},t_{j}^{N_{1}}]$, $j=1,\dots,N_{1}$, for some integer $N_{1} \geq 1$, in such a way that\\ $\max_{j=0,\dots,N_{1}}(t_{j}^{N_{1}}-t_{j-1}^{N_{1}}) \rightarrow 0$ as $N_{1} \rightarrow \infty$.
Then,
\begin{align*}
J_{2,i}^{n} = \frac{1}{n} \sum_{j=1}^{N_{1}} \sum_{k=A_{i}^{n}(t_{j}^{N_{1}})+1}^{A_{i}^{n}(t_{j+1}^{N_{1}})} \mathbbm{1}_{\{v_{ik} > \overline{\eta}_{i}^{n}(U_{ik}^{n},t)\}}.
\end{align*}
Suppose that $t_{j-1}^{N_{1}} \leq U_{ik}^{n} \leq t_{j}^{N_{1}}$ for some $j \in \{1,\dots,N_{1}\}$, some $k \in \{A_{i}^{n}(\xi_{i}(t) + \epsilon)+1,\dots,A_{i}^{n}(t)\}$, and some $n > n_{\delta}$. It then follows from \eqref{app eq: uniform convergence eta} that for $n > n_{\delta}$
\begin{align*}
\overline{\eta}_{i}(t_{j}^{N_{1}},t) - \delta \leq \overline{\eta}_{i}^{n}(U_{ik}^{n},t) \leq \overline{\eta}_{i}(t_{j-1}^{N_{1}},t) + \delta,
\end{align*}
which yields
\begin{align*}
&\frac{1}{n} \sum_{j=1}^{N_{1}} \sum_{k=A_{i}^{n}(t_{j-1}^{N_{1}})+1}^{A_{i}^{n}(t_{j}^{N_{1}})} \mathbbm{1}_{\{v_{ik} > \overline{\eta}_{i}(t_{j}^{N_{1}},t) - \delta \}} \leq J_{2,i}^{n} \\
&\leq \frac{1}{n} \sum_{j=1}^{N_{1}} \sum_{k=A_{i}^{n}(t_{j-1}^{N_{1}})+1}^{A_{i}^{n}(t_{1}^{N_{1}})} \mathbbm{1}_{\{v_{ik} > \overline{\eta}_{i}(t_{j-1}^{N_{1}},t) + \delta\}}.
\end{align*}
Using Lemma~$5.1$ in \cite{GRZ-FLPCI}, we obtain
\begin{align*}
\limsup_{n \rightarrow \infty} J_{2,i}^{n} \leq \frac{\lambda}{{N \choose d}} \sum_{j=1}^{N_{1}} (t_{j}^{N_{1}}-t_{j-1}^{N_{1}}) \mathbb{P}(v_{ik} > \overline{\eta}_{i}(t_{j-1}^{N_{1}},t) + \delta),
\end{align*}
\begin{align*}
\liminf_{n \rightarrow \infty} J_{2,i}^{n} \geq \frac{\lambda}{{N \choose d}} \sum_{j=1}^{N_{1}} (t_{j}^{N_{1}}-t_{j-1}^{N_{1}}) \mathbb{P}(v_{ik} > \overline{\eta}_{i}(t_{j}^{N_{1}},t) - \delta).
\end{align*}
For $s \in [\xi_{i}(t) + \epsilon, t]$ the bounded convergence theorem implies that
\begin{align*}
\lim_{N_{1} \rightarrow \infty} \sum_{j=1}^{N_{1}} \mathbbm{1}_{[t_{j}^{N_{1}}-t_{j-1}^{N_{1}})}(s) \mathbb{P}(v_{ik} > \overline{\eta}_{i}(t_{j-1}^{N_{1}},t) + \delta) = \mathbb{P}(v_{ik} > \overline{\eta}_{i}(s,t) + \delta),
\end{align*}
\begin{align*}
\lim_{N_{1} \rightarrow \infty} \sum_{j=1}^{N_{1}} \mathbbm{1}_{[t_{j}^{N_{1}}-t_{j-1}^{N_{1}})}(s) \mathbb{P}(v_{ik} > \overline{\eta}_{i}^{n}(t_{j}^{N_{1}},t) - \delta) = \mathbb{P}(v_{ik} > \overline{\eta}_{i}(s,t) - \delta).
\end{align*}
Letting $N_{1} \rightarrow \infty$, we deduce
\begin{align*}
\limsup_{n \rightarrow \infty} J_{2,i}^{n} \leq \frac{\lambda}{{N \choose d}} \int_{\xi_{i}(t) + \epsilon}^{t} \mathbb{P}(v_{ik} > \overline{\eta}_{i}(s,t) + \delta) \text{d}s,
\end{align*}
\begin{align*}
\liminf_{n \rightarrow \infty} J_{2,i}^{n} \geq \frac{\lambda}{{N \choose d}} \int_{\xi_{i}(t) + \epsilon}^{t} \mathbb{P}(v_{ik} > \overline{\eta}_{i}(s,t) - \delta) \text{d}s.
\end{align*}
Passing $\delta \downarrow 0$ and $\epsilon \downarrow 0$, we obtain because of continuity,
\begin{align}
\label{app eq: J2 term}
\lim_{n \rightarrow \infty} J_{2,i}^{n} = \frac{\lambda}{{N \choose d}} \int_{\xi_{i}(t)}^{t} \mathbb{P}(v_{ik} > \overline{\eta}_{i}(s,t)) \text{d}s.
\end{align}
We now determine $\lim_{n \rightarrow \infty} J_{1,i}^{n}$ and $\lim_{n \rightarrow \infty} I_{i}^{n}$. Fatou's lemma and the fact that $\xi_{i}(t) < t$ imply
\begin{align}
\label{app eq: lim inf eta}
\liminf_{n \rightarrow \infty} \overline{\eta}_{i}^{n}(0,t) \geq \int_{u=0}^{t} \liminf_{n \rightarrow \infty} \varphi(\overline{Q}_{i}^{n}(u)) \text{d}u =  \overline{\eta}_{i}(0,t) = \infty.
\end{align}
We partition the interval $[0,\xi_{i}(t)]$ into $N_{2}$ subintervals $[s_{j-1}^{N_{2}},s_{j}^{N_{2}}]$, $j=1,\dots,N_{2}$, for some integer $N_{2} \geq 1$, in such a way that $\max_{j=1,\dots,N_{2}} (s_{j}^{N_{2}}-s_{j-1}^{N_{2}}) \rightarrow 0$ as $N_{2} \rightarrow \infty$. Suppose $s_{j-1}^{N_{2}} \leq U_{ik}^{n} \leq s_{j}^{N_{2}}$ for some $j \in \{1,\dots,N_{2}\}$ and $k \in \{1,\dots,A_{i}^{n}(\xi_{i}(t))\}$. It then follows from \eqref{app eq: lim inf eta} that
\begin{align}
\label{app eq: lim inf partition eta}
\liminf_{n \rightarrow \infty} \overline{\eta}_{i}^{n}(U_{ik}^{n},t) \geq \liminf_{n \rightarrow \infty} \overline{\eta}_{i}^{n}(s_{j}^{N_{2}},t) \geq  \overline{\eta}_{i}(s_{j}^{N_{2}},t) = \infty.
\end{align}
For $J_{1,i}^{n}$ we have
\begin{align*}
0 \leq J_{1,i}^{n} \leq \frac{1}{n}\sum_{k=1}^{A_{i}^{n}(\xi_{i}(t))} \mathbbm{1}_{\{v_{ik} > \overline{\eta}_{i}^{n}(U_{ik}^{n},t)\}} + \frac{1}{n}(A_{i}^{n}(\xi_{i}(t) + \epsilon) - A_{i}^{n}(\xi_{i}(t))).
\end{align*}
The first term on the right-hand side tends to $0$ by \eqref{app eq: lim inf partition eta}, while the second term converges to $\frac{\lambda}{{N \choose d}} \epsilon$ according to Lemma~$5.1$ in \cite{GRZ-FLPCI}. Passing $\epsilon \downarrow 0$, we obtain
\begin{align}
\label{app eq: J1 term}
\lim_{n \rightarrow \infty} J_{1,i}^{n} = 0 = \frac{\lambda}{{N \choose d}} \int_{s=0}^{\xi_{i}(t)} \mathbb{P}(v_{ik} > \overline{\eta}_{i}(0,t))\text{d}s.
\end{align}
The term $I_{i}^{n}$ follows from \eqref{app eq: lim inf eta}:
\begin{align}
\label{app eq: term inital jobs}
\lim_{n \rightarrow \infty} \frac{1}{n} \sum_{l=1}^{n \overline{Q}_{i}^{n}(0)} \mathbbm{1}_{\{v'_{il} > \overline{\eta}_{i}^{n}(0,t)\}} = 0 = \overline{Q}_{i}(0) \mathbb{P}(v'_{il} > \overline{\eta}_{i}(0,t)).
\end{align}
Taking the sum of \eqref{app eq: J1 term}, \eqref{app eq: J2 term} and \eqref{app eq: term inital jobs}, yields the right-hand side of \eqref{eq: fluid limit}. The limit on the left-hand side is $\lim_{n \rightarrow \infty} \overline{Q}_{i,n}(t) = \overline{Q}_{i}(t)$. This proves \eqref{eq: fluid limit} in case $\xi_{i}(t) < t$. 

In case $\xi_{i}(t) = t$, Equation \eqref{eq: fluid limit} immediately follows from the fact that $\overline{\eta}_{i}^{n}(s,t) \rightarrow \infty$ for any $s \in [0,t]$.

It remains to treat the case when $\overline{Q}_{i}(u) > 0$ for all $u \in [0,t]$. Then $\overline{\eta}_{i}^{n}(u,v)$ converges uniformly to $\overline{\eta}_{i}(u,v)$ for any $u,v \in [0,t]$, while the expression $J_{i}^{n}$ follows by the same argument as used in $J_{2,i}^{n}$, on the interval $[\xi_{i}(t) + \epsilon, t]$. For any $\epsilon > 0 $, there exists an $n_{\epsilon}$ such that $\overline{\eta}_{i}^{n}(0) \in (\overline{\eta}_{i}(0,t) - \epsilon, \overline{\eta}_{i}(0,t) + \epsilon)$ for all $n > n_{\epsilon}$. Multiplying and dividing $I_{i}^{n}$ by $\overline{Q}_{i,n}(0)$, we deduce
\begin{align*}
\limsup_{n \rightarrow \infty} \frac{1}{n} \sum_{l=1}^{n \overline{Q}_{i}^{n}(0)} \mathbbm{1}_{\{v'_{il} > \overline{\eta}_{i}^{n}(0)\}} \geq \overline{Q}_{i}(0) \mathbb{P}(v'_{il} > \overline{\eta}_{i}(0,t) - \epsilon),
\end{align*}
\begin{align*}
\liminf_{n \rightarrow \infty} \frac{1}{n} \sum_{l=1}^{n \overline{Q}_{i}^{n}(0)} \mathbbm{1}_{\{v'_{il} > \overline{\eta}_{i}^{n}(0)\}} \leq \overline{Q}_{i}(0) \mathbb{P}(v'_{il} > \overline{\eta}_{i}(0,t) + \epsilon).
\end{align*}
Letting $\epsilon \downarrow 0$, we find that
\begin{align*}
\lim_{n \rightarrow \infty} I_{i}^{n} = \overline{Q}_{i}(0) \mathbb{P}(v'_{il} > \overline{\eta}_{i}(0,t)).
\end{align*}
This completes the proof.

\end{document}